\definecolor{shadecolor}{rgb}{1,0.8,0.3}
\title{\textbf{Asymptotics in Multiple Hypotheses Testing under Dependence: beyond Normality}}
\date{}
\author{Monitirtha Dey \\
        Institute for Statistics, University of Bremen, 28359 Bremen, Germany \\
        \textit{\href{mdey@uni-bremen.de}{mdey@uni-bremen.de}, 
        \href{monitirthadey3@gmail.com}{monitirthadey3@gmail.com}}}
\begin{document}

\maketitle
\theoremstyle{plain}
\newtheorem{axiom}{Axiom}
\newtheorem{remark}{Remark}
\newtheorem{corollary}{Corollary}[section]
\newtheorem{claim}[axiom]{Claim}
\newtheorem{theorem}{Theorem}[section]
\newtheorem{lemma}{Lemma}[section]

\newaliascnt{lemmaa}{theorem}
\newtheorem{lemmaa}[lemmaa]{Theorem}
\aliascntresetthe{lemmaa}
\providecommand*{\lemmaaautorefname}{Theorem}



\theoremstyle{plain}
\newtheorem{exa}{Example}
\newtheorem{rem}{Remark}
\newtheorem{proposition}{Proposition}

\theoremstyle{definition}
\newtheorem{definition}{Definition}
\newtheorem{example}{Example}







\begin{abstract}
Correlated observations are ubiquitous phenomena in a plethora of scientific avenues. Tackling this dependence among test statistics has been one of the pertinent problems in simultaneous inference. However, very little literature exists that elucidates the effect of correlation on different testing procedures under general distributional assumptions. In this work, we address this gap in a unified way by considering the multiple testing problem under a general correlated framework. We establish an upper bound on the family-wise error rate(FWER) of Bonferroni's procedure for equicorrelated test statistics. Consequently, we find that for a quite general class of distributions, Bonferroni FWER asymptotically tends to zero when the number of hypotheses approaches infinity. We extend this result to general positively correlated elliptically contoured setups. We also present examples of distributions for which Bonferroni FWER has a strictly positive limit under equicorrelation. We extend the limiting zero results to the class of step-down procedures under quite general correlated setups. Specifically, the probability of rejecting at least one hypothesis approaches zero asymptotically for any step-down procedure. The results obtained in this work generalize existing results for correlated Normal test statistics and facilitate new insights into the performances of multiple testing procedures under dependence.

\end{abstract}
\vspace{5mm}
\noindent \textbf{\textit{Keywords.}} 
Familywise error rate,
Multiple testing under dependence,
Stepwise Procedures,
Holm's Method,
Elliptically Contoured Distributions.

\vspace{2mm}
\noindent \textbf{\textit{MSC 2020 Classification.}} 62J15, 62F03.

\newpage
\section{Introduction\label{sec:1}}

Dependence among observations is a natural phenomenon in a plethora of scientific avenues. It is more so when one is interested in asking several related questions. Thus, capturing and tackling the correlation among the test statistics corresponding to the various hypotheses is a problem of pivotal importance in simultaneous inference \citep{Chi2024, Gasparin2024, Sarkar2023, Xu2024}. Hence, the research on how correlation among test statistics affects the performances of multiple testing procedures has garnered much interest lately.  

\vspace{3mm}
The equicorrelation structure portrays the simplest kind of dependence. Although simple in form, the equicorrelated setup arises in several applications, e.g., in the problem of comparing a control against several treatments. Consequently, numerous recent works in multiple testing consider the equicorrelated setup \citep{Delattre, Proschan, royspl}. We note that equicorrelated normal test statistics with common mean and variance are exchangeable. As also noted by \cite{Gasparin2024}, exchangeability can be induced under any configuration by processing the (potentially non-exchangeable) sequence of test statistics in a uniformly random order. Hence, if randomization is allowed, it is always possible to satisfy the
exchangeability assumption even if the starting sequence has an arbitrary dependence.

\vspace{3mm}
\cite{das_2021} have shown that the Bonferroni familywise error rate (FWER henceforth) is asymptotically a convex function in the common correlation $\rho$ under the equicorrelated normal framework. This result, in turn, gives that the Bonferroni FWER is bounded above by $\alpha(1-\rho)$, $\alpha$ being the desired level. To improve this result, \cite{deybhandari} have elucidated that the Bonferroni FWER asymptotically goes to zero for any strictly positive $\rho$. \cite{deystpa} has recently extended this limiting zero result to the general class of step-down FWER-controlling procedures. In a related but different direction, \cite{deycstm} has obtained upper bounds on the Bonferroni FWER in the equicorrelated setup with small and moderate dimensions. However, the bounds presented in \cite{deycstm} do not converge to zero as the number of hypotheses becomes arbitrarily large. Hence, there is a scope to find upper bounds on Bonferroni FWER that approaches zero as the number of hypotheses diverges to infinity. Also, all the works mentioned earlier consider equicorrelated normal distributions. 

\vspace{3mm}
However, very little literature exists that elucidates the effect of correlation on different testing procedures under general distributional assumptions. In this work, we address this gap unifyingly by considering the multiple testing problem under a general correlated framework. We establish an upper bound on the family-wise error rate(FWER) of Bonferroni's procedure for equicorrelated test statistics. Consequently, we find that for a quite general class of distributions, Bonferroni FWER asymptotically tends to zero when the number of hypotheses approaches infinity. We extend this result to general positively correlated elliptically contoured setups. We also present examples of distributions for which Bonferroni FWER has a strictly positive limit under equicorrelation. We extend the limiting zero results to the class of step-down procedures under quite general correlated setups. Specifically, the probability of rejecting at least one hypothesis approaches zero asymptotically for any step-down procedure. The results obtained in this work generalize existing results for correlated Normal test statistics and facilitate new insights into the performances of multiple testing procedures under dependence. 

\vspace{3mm}

This paper is organized as follows. We first formally introduce the framework with the assumptions and summarize some results on the limiting behavior of the Bonferroni method under the equicorrelated normal setup in the next section. Section \ref{sec:3} studies in detail the limiting behaviors of the Bonferroni FWER under general distributional assumptions. Section \ref{sec:4} extends these results to general positively correlated elliptically contoured setups. We extend the limiting zero results to the class of step-down FWER-controlling procedures in Section \ref{sec:5}. A detailed empirical  study is presented in Section \ref{sec:6}. We sketch out our contributions and briefly discuss potential problems in similar avenues in Section \ref{sec:7}.

\vspace{3mm}
Throughout this paper, $H(\mu, \sigma^2)$ represents a cdf $H$  parametrized by the mean $\mu$ and variance $\sigma^2$. 

\section{Preliminaries\label{sec:2}} 
\subsection{The General Equicorrelated Sequence Model Framework}  
We consider a \textit{sequence model} framework \citep{Finner2001a, FDR2007, Proschan}. We have $n$ (possibly correlated) observations from a distribution $F^{\star}$: 
$$X_{i} {\sim} F^{\star}(\mu_{i},1),  \quad i \in \{1, \ldots,n\}.$$
So, $X_{i}$ has mean $\mu_{i}$ and variance $1$. We take unit variances since the literature on the asymptotic multiple testing theory usually considers known variances \citep{Abramovich, Bogdan, Donoho}. 

Here the $X_{i}$’s are equicorrelated: for some $\rho \in (0,1)$, we have 
$$X_{i} = \sqrt{1-\rho}Z_{i} + \sqrt{\rho} U,  \quad i \in \{1, \ldots,n\},$$
where $Z_{i} \sim F(\frac{\mu_{i}}{\sqrt{1-\rho}},1)$ and $U \sim G(0,1)$ independently. Also, $Z_{i}$’s are independent to each other. These kind of equicorrelated setups are routinely considered in multiple testing literature \citep{Delattre, DickhausThesis, Proschan, royspl}. The equicorrelated setup also includes the problem of comparing several treatments against a control. 

\subsection{The Multiple Testing Problem}

This paper considers the following multiple testing problem: 
$$H_{0i}:\mu_{i}=0 \quad vs \quad H_{1i}:\mu_{i}>0, \quad 1 \leq i \leq n.$$ Under the global null $ H_{0}:=\bigcap_{i=1}^{n} H_{0 i}$, each $\mu_{i}$ is zero. Let $\mathcal{A}$ denote the set of indices corresponding to the true nulls. So, under the global null, $\mathcal{A}$ is $\{1, \ldots, n\}$. Throughout this work, $f^{\star}, f, g$ denote the density functions of $F^{\star}, F, G$ respectively (their existence is assumed). Also, $\alpha \in (0,1)$ denotes the desired level of FWER control.

Similar to \cite{deystpa}, let $R_{n}(T)$ and $V_n(T)$ respectively denote the number of rejections and the number of type I errors of a multiple testing procedure (MTP henceforth) $T$. So, $S_n(T)= R_n(T)-V_n(T)$ is the number of correct rejections. The FWER of procedure $T$ is given by
\begin{equation}
    FWER_{T}(n, \alpha, \Sigma_n)= \mathbb{P}_{\Sigma_{n}}(V_{n}(T)\geq 1)
    \label{defFWER}
\end{equation}
where $\Sigma_{n}$ is the covariance matrix of $(X_1, \ldots, X_n)$. 
In many of our results, we shall consider the probability in the r.h.s of \eqref{defFWER} under the global null $H_{0}$ at first (and view that as the definition of FWER) for the technical simplicity. Then we generalize the results obtained in the global null case to arbitrary combination of true and false null hypotheses.





There are several notions of power in simultaneous statistical inference \citep{Dudoit}. One of these is \textit{AnyPwr} \citep{Dudoit}, which is the probability of rejecting at least one false null. So, for a MTP $T$,
 $$AnyPwr_{T} = \mathbb{P}_{\Sigma_{n}}(S_n(T) \geq 1).$$
 
As in \cite{deystpa}, throughout this work, $\Gamma_n$ denotes the $n \times n$ matrix with each diagonal entry equal to 1 and each off-diagonal entry equal to $\rho$. Also, $\Sigma_{n}$ is the $n \times n$ correlation matrix with $(i,j)$'th entry $\rho_{ij}$ for $i \neq j$. 

\subsection{The Bonferroni Procedure}
The Bonferroni procedure has been a cornerstone of simultaneous statistical inference. In one-sided settings (as ours), it rejects $H_{0i}$ if $X_{i}>(F^{\star})^{-1}(1-\alpha/n) (=c_{Bon}, \text{say})$. The Bonferroni FWER (when the test statistics have covariance $\Sigma_n$) is given by
\begin{equation*}
    FWER_{Bon}(n, \alpha, \Sigma_n) =\mathbb{P}_{\Sigma_n}\left(X_{i}>c_{Bon}\right. \text{for some} \left. i \in \mathcal{A}\right)
=\mathbb{P}_{\Sigma_n}\bigg(\bigcup_{i \in \mathcal{A}}\{X_i > c_{Bon}\}\bigg). 
\end{equation*}
We denote $FWER_{T}(n,\alpha, \Gamma_n)$ as $FWER_{T}(n,\alpha, \rho)$ for simplicity. ~\cite{das_2021} show the following in the equicorrelated Normal (i.e, $F^{\star}=F=G$ is the cdf of standard normal) case:
\begin{theorem}\label{thm2.1} \citep{das_2021} Consider the equicorrelated Normal setup. 
Given any $\alpha \in (0,1)$ and $\rho \in [0,1]$, $FWER_{Bon}(n, \alpha, \rho)$ is asymptotically bounded above by $\alpha(1-\rho)$ under the global null.
\end{theorem}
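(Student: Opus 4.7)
The plan is a two-step argument. I first establish that $\rho \mapsto FWER_{Bon}(n,\alpha,\rho)$ is asymptotically convex on $[0,1]$, and then combine this with the endpoint values to bound $FWER_{Bon}$ above by the chord $(1-\rho)\alpha + \rho \cdot 0 = \alpha(1-\rho)$.

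The starting point is a conditional-expectation representation. Under the global null, with $X_i = \sqrt{1-\rho}\,Z_i + \sqrt{\rho}\,U$, conditioning on $U=u$ makes the $X_i$'s i.i.d.\ $N(\sqrt{\rho}\,u,\,1-\rho)$, so
\begin{equation*}
    FWER_{Bon}(n,\alpha,\rho) \;=\; 1 - \mathbb{E}_{U}\left[\Phi\left(\frac{c_{Bon} - \sqrt{\rho}\,U}{\sqrt{1-\rho}}\right)^{n}\right],
\end{equation*}
where $\Phi$ is the standard normal cdf. This formula is the common entry point for both the endpoint evaluations and the convexity analysis.

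The endpoints are easy. At $\rho = 0$ one recovers independence and $FWER_{Bon}(n,\alpha,0) = 1 - (1-\alpha/n)^n \to 1 - e^{-\alpha}$; at $\rho = 1$ all test statistics coincide with $U$, so $FWER_{Bon}(n,\alpha,1) = \alpha/n \to 0$. Since $1 - e^{-\alpha} \leq \alpha$, once asymptotic convexity in $\rho$ is in hand, the chord-above-convex inequality gives, in the limit,
\begin{equation*}
    \limsup_n FWER_{Bon}(n,\alpha,\rho) \;\leq\; (1-\rho)(1 - e^{-\alpha}) + \rho \cdot 0 \;\leq\; \alpha(1-\rho),
\end{equation*}
which is the advertised bound.

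The principal obstacle is the asymptotic convexity itself. The natural route is to differentiate twice inside the expectation and show that $\partial^2 FWER_{Bon}/\partial\rho^2 \geq 0$ for $n$ large. After the chain rule, the second derivative becomes an integral involving $\phi\!\big((c_{Bon} - \sqrt{\rho}\,u)/\sqrt{1-\rho}\big)$ multiplied by polynomial and exponential factors in $c_{Bon}$, $u$, and $\rho$. Since $c_{Bon} \sim \sqrt{2\ln(n/\alpha)}$ as $n \to \infty$, controlling the sign of this expression requires the standard tail asymptotics of the normal density together with careful balancing of the three competing effects (the shift $\sqrt{\rho}\,u$, the scale $\sqrt{1-\rho}$, and the outer exponent $n$). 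A cleaner alternative is to apply Price's theorem to the non-rejection probability $\mathbb{P}(\max_i X_i \leq c_{Bon})$, converting $\rho$-derivatives into integrals of mixed partials of the bivariate Gaussian density; this typically exposes the curvature structure more transparently, but still demands the same asymptotic bookkeeping to sign the resulting quantity.
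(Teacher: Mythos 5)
First, a point of reference: the paper does not prove this statement itself --- Theorem \ref{thm2.1} is quoted from \cite{das_2021}, and the surrounding text records that the argument there proceeds exactly along the lines you outline: first establish that $\rho \mapsto FWER_{Bon}(n,\alpha,\rho)$ is asymptotically convex on $[0,1]$, then combine the endpoint limits $1-e^{-\alpha}$ (at $\rho=0$) and $0$ (at $\rho=1$) with the chord-above-convex inequality and $1-e^{-\alpha}\le\alpha$. Your conditional representation $FWER_{Bon}(n,\alpha,\rho) = 1 - \mathbb{E}_U\big[\Phi\big((c_{Bon}-\sqrt{\rho}\,U)/\sqrt{1-\rho}\big)^n\big]$, the two endpoint evaluations, and the final chord step are all correct, so the skeleton matches the intended proof.

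The genuine gap is that you never actually prove the asymptotic convexity, and that is the entire technical content of the theorem. Your paragraph on the ``principal obstacle'' names two candidate strategies (differentiating twice under the expectation and signing the result via normal tail asymptotics; or Price's theorem applied to the non-rejection probability) but executes neither: there is no computation of the second derivative, no use of $c_{Bon}\sim\sqrt{2\ln(n/\alpha)}$ beyond stating it, and no argument that the curvature is eventually nonnegative. As written, the proposal reduces the theorem to an unproven lemma that is at least as hard as the theorem itself. A secondary point to make precise if you do carry this out: ``asymptotically convex'' must be formulated so that it actually transfers the chord bound to the limit --- for instance, convexity of the pointwise limit function, or convexity of $FWER_{Bon}(n,\alpha,\cdot)$ on all of $[0,1]$ for every $n$ beyond some threshold. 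Convexity holding only in some limiting sense at each fixed $\rho$ separately does not by itself yield $\limsup_n FWER_{Bon}(n,\alpha,\rho)\le(1-\rho)(1-e^{-\alpha})$, so this bookkeeping is not optional.
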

\cite{deybhandari} have obtained the following improvement: 
\begin{theorem}\label{thm2.2} \citep{deybhandari} Consider the equicorrelated Normal setup. Given any $\alpha \in (0,1)$ and $\rho \in (0,1]$, we have
$$\lim_{n \to \infty} FWER_{Bon}(n, \alpha, \rho) = 0$$
under any configuration of true and false null hypotheses.
\end{theorem}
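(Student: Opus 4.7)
The plan is to decouple the test statistics by conditioning on the common factor $U$, to reduce matters to a pointwise limit via dominated convergence, and to close the argument by a sharp tail estimate on the standard normal. Let $\mathcal{A}$ denote the set of true nulls, with $n_0:=|\mathcal{A}|\le n$. For $i\in\mathcal{A}$ we have $\mu_i=0$, so $X_i=\sqrt{1-\rho}\,Z_i+\sqrt{\rho}\,U$ with $Z_i\sim N(0,1)$ mutually independent and independent of $U\sim N(0,1)$. Conditionally on $U=u$, the events $\{X_i>c_{Bon}\}_{i\in\mathcal{A}}$ are independent, each of probability $\bar\Phi(t_n(u))$, where $c_{Bon}=\Phi^{-1}(1-\alpha/n)$ and $t_n(u):=(c_{Bon}-\sqrt{\rho}\,u)/\sqrt{1-\rho}$. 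Consequently,
\[
FWER_{Bon}(n,\alpha,\rho)=\mathbb{E}\!\left[1-\bigl(1-\bar\Phi(t_n(U))\bigr)^{n_0}\right]\le\mathbb{E}\!\left[1-\bigl(1-\bar\Phi(t_n(U))\bigr)^{n}\right],
\]
the inequality being the elementary $(1-p)^{n_0}\ge(1-p)^{n}$ for $p\in[0,1]$ and $n_0\le n$. It suffices to show that this upper bound tends to $0$.

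Since the integrand is bounded by $1$, I would next apply dominated convergence and reduce the task to the pointwise limit $n\,\bar\Phi(t_n(u))\to 0$ for each fixed $u\in\mathbb{R}$; this forces $(1-\bar\Phi(t_n(u)))^{n}\to 1$ and hence makes the integrand vanish. To produce the tail bound I would start from Mill's ratio, $\bar\Phi(x)\le\phi(x)/x$ for $x>0$, together with the identity
\[
\frac{\phi(t_n(u))}{\phi(c_{Bon})}=\exp\!\left(-\frac{\rho\,c_{Bon}^{2}}{2(1-\rho)}+\frac{\sqrt{\rho}\,c_{Bon}\,u}{1-\rho}-\frac{\rho\,u^{2}}{2(1-\rho)}\right).
\]
Combining this with $\phi(c_{Bon})\sim \alpha c_{Bon}/n$ (another application of Mill's ratio, since $\bar\Phi(c_{Bon})=\alpha/n$) and the classical asymptotic $c_{Bon}\sim\sqrt{2\log n}$, a short calculation delivers, for every fixed $u$,
\[
n\,\bar\Phi(t_n(u))=O\!\left(n^{-\rho/(1-\rho)+o(1)}\right)\xrightarrow{n\to\infty}0,
\]
as required. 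The boundary case $\rho=1$ is immediate, since then $X_i\equiv U$ and $FWER_{Bon}=\mathbb{P}(U>c_{Bon})=\alpha/n\to 0$.

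The main obstacle is the tail estimate above: one must balance the negative Gaussian exponent $-\rho c_{Bon}^{2}/(2(1-\rho))$ against the polynomial factor $1/n$ inherited from the Bonferroni threshold sharply enough to leave a strictly negative power of $n$. This balance is exactly where the hypothesis $\rho>0$ enters the argument---through the decisive factor $n^{-\rho/(1-\rho)}$---and once it is in hand the remainder of the proof is routine manipulation with standard normal tails and dominated convergence.
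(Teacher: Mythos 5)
Your proof is correct, and it is worth noting that the paper does not actually prove Theorem \ref{thm2.2} itself --- it only cites it from \cite{deybhandari} and later subsumes it via the general machinery of Section \ref{sec:3} (Theorems \ref{thm3.1new}--\ref{thm3.4new} specialized to $F=G=\Phi$). Your opening move coincides with the paper's: condition on the shared factor $U$ to get conditional independence, write $1-FWER_{Bon}=\mathbb{E}_U\big[F^{n_0}(t_n(U))\big]$, and reduce an arbitrary configuration to the global null via $(1-p)^{n_0}\ge(1-p)^n$ (exactly the monotonicity step in the proof of Theorem \ref{thm3.4new}). Where you diverge is in how the limit of the expectation is computed. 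The paper avoids any pointwise tail analysis: it splits the integral over $u$ (the parameter $d$ in Theorem \ref{thm3.1new}) to get a finite-$n$ lower bound on $\mathbb{E}_U[F^n(\cdot)]$ in terms of $F^n\!\left(c_{Bon}d/\sqrt{1-\rho}\right)$, and then establishes $F^n\!\left(c_{Bon}/\sqrt{1-\rho}\right)\to 1$ through a L'H\^{o}pital computation of $\log\lim F^n$ (Lemma \ref{lemma3.1new}) under structural hypotheses on $f$ and $g$. You instead apply dominated convergence (legitimate since the integrand lies in $[0,1]$) and verify the pointwise limit $n\bar\Phi(t_n(u))\to 0$ directly with Mill's ratio, the identity for $\phi(t_n(u))/\phi(c_{Bon})$, and $c_{Bon}\sim\sqrt{2\log n}$; your exponent $n^{-\rho/(1-\rho)+o(1)}$ is correct, and your separate treatment of $\rho=1$ is needed since the paper's general theorems only cover $\rho\in(0,1)$. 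The trade-off: your route is more elementary and yields an explicit polynomial decay rate, but it is tied to Gaussian tail asymptotics, whereas the paper's integral-splitting bound and L'H\^{o}pital lemma are designed to extend to general $F$ and $G$ and to deliver non-asymptotic upper bounds. Both arguments are sound.
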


They extend Theorem \ref{thm2.2} to general correlated normal setups:
\begin{theorem}\label{thm2.3} \citep{deybhandari}
Let $\Sigma_n$ be the correlation matrix of $X_1, \ldots, X_n$ with $(i,j)$’th entry $\rho_{ij}$ such that $\liminf \rho_{ij}=\delta>0$. Then, for any $\alpha \in (0,1)$, we have
$$\lim_{n \to \infty}FWER_{Bon}(n,\alpha,\Sigma_{n}) = 0$$
under any configuration of true and false null hypotheses.
\end{theorem}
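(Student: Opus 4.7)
The plan is to reduce the general positively-correlated Gaussian case to the equicorrelated case already handled by Theorem \ref{thm2.2}, using Slepian's comparison inequality for maxima of Gaussian vectors. Fix any $\delta' \in (0, \delta)$. The hypothesis $\liminf \rho_{ij} = \delta$ yields an index $N = N(\delta')$ such that $\rho_{ij} > \delta'$ whenever $i \neq j$ and $\min(i, j) > N$.

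I would first argue under the global null and split
\begin{equation*}
FWER_{Bon}(n, \alpha, \Sigma_n) \leq \mathbb{P}\!\left(\bigcup_{i \leq N} \{X_i > c_{Bon}\}\right) + \mathbb{P}\!\left(\bigcup_{N < i \leq n} \{X_i > c_{Bon}\}\right).
\end{equation*}
The first summand is at most $N\alpha/n$ by Bonferroni's inequality and vanishes as $n \to \infty$ with $N$ fixed. For the second summand, observe that the subvector $(X_{N+1}, \ldots, X_n)$ is centered Gaussian with unit variances and pairwise correlations strictly exceeding $\delta'$. Introduce an auxiliary centered Gaussian vector $(Y_{N+1}, \ldots, Y_n)$ with unit variances and common correlation exactly $\delta'$ (its covariance matrix is positive semi-definite for every $n$ since $\delta' > 0$). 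Slepian's inequality then yields
\begin{equation*}
\mathbb{P}\!\left(\max_{N < i \leq n} X_i > c_{Bon}\right) \leq \mathbb{P}\!\left(\max_{N < i \leq n} Y_i > c_{Bon}\right).
\end{equation*}
Since $c_{Bon} = \Phi^{-1}(1 - \alpha/n) \geq \Phi^{-1}(1 - \alpha/(n - N))$, monotonicity of the tail event in the threshold further bounds the right-hand side by $FWER_{Bon}(n - N, \alpha, \delta')$, which tends to zero by Theorem \ref{thm2.2}. Summing the two pieces disposes of the global-null case.

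To pass to an arbitrary configuration of true and false nulls, I would replay the same argument on the true-null index set $\mathcal{A}$ rather than $\{1, \ldots, n\}$. If $|\mathcal{A}|$ stays bounded, $FWER_{Bon}$ is at most $|\mathcal{A}|\alpha/n \to 0$ trivially. Otherwise the hypothesis $\liminf \rho_{ij} = \delta$ carries over to indices restricted to $\mathcal{A}$ (being a statement about the correlations as the indices tend to infinity), and the Slepian-based reduction applies verbatim.

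The main obstacle I expect is the careful bookkeeping when applying Slepian's inequality: one must verify the auxiliary equicorrelated covariance is genuinely positive semi-definite at every dimension, and reconcile the threshold $c_{Bon}$, set by the ambient dimension $n$, with the equicorrelated benchmark at the slightly smaller dimension $n - N$ invoked in Theorem \ref{thm2.2}. Beyond this, the argument is essentially a repackaging of the equicorrelated result through a uniform Gaussian comparison.
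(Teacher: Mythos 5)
Your argument is correct and is essentially the same as the one this paper uses for its elliptically contoured generalization (Theorem \ref{section4thm1}), whose proof also isolates the finitely many indices with small correlations via Boole's inequality, applies a Slepian-type comparison (Theorem \ref{Somesh}, of which Slepian's inequality is the Gaussian case) to pass to an equicorrelated benchmark at level $\delta$, and then invokes the equicorrelated zero-limit result. The threshold bookkeeping you flag ($c_{Bon}=\Phi^{-1}(1-\alpha/n)\geq\Phi^{-1}(1-\alpha/(n-N))$) is handled the same way in the paper and is sound.
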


In a related but different direction, \cite{deycstm} has obtained upper bounds on the Bonferroni FWER in the equicorrelated setup with small and moderate dimensions. However, those upper bounds do not converge to zero as the number of hypotheses grows to infinity. Also, each of the works and results mentioned in this subsection assumes multivariate normality as the joint distribution of the test statistics. Hence, studying the corresponding limiting behaviors in general equicorrelated structures becomes interesting. In the subsequent sections, we address this gap unifyingly by considering a general equicorrelated setup.




\section{Limiting Bonferroni FWER under General Equicorrelation \label{sec:3}}
\subsection{Zero Limit of Bonferroni FWER}
Firstly we obtain an upper bound on Bonferroni FWER under the general equicorrelated setup. 
\begin{theorem}\label{thm3.1new} Consider the equicorrelated setup mentioned in Section 2.1 with $\alpha \in (0,1), \rho \in (0,1)$. We have, for each $d \in (0,1)$,
$$
FWER_{Bon}(n, \alpha, \rho) \leqslant 1 - G(0) \cdot F^{n}\left(\frac{c_{Bon}}{\sqrt{1-\rho}}\right)-\left[G\left(c_{Bon} \cdot \frac{1-d}{\sqrt{\rho}}\right)-G(0)\right] \cdot F^{n}\left(\frac{c_{Bon} \cdot d}{\sqrt{1-\rho}}\right)
$$
under the global null hypothesis. 
\end{theorem}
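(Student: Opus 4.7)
The plan is to compute the probability that no hypothesis is rejected by first conditioning on the common factor $U$, and then to underestimate the resulting integral by splitting the range of $u$ at two carefully chosen points.

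Concretely, I would start by writing
\begin{equation*}
1 - FWER_{Bon}(n,\alpha,\rho) = \mathbb{P}\bigg(\bigcap_{i=1}^{n}\{X_{i}\leqslant c_{Bon}\}\bigg).
\end{equation*}
Given $U=u$, the event $\{X_{i}\leqslant c_{Bon}\}$ is equivalent to $\{Z_{i}\leqslant (c_{Bon}-\sqrt{\rho}\,u)/\sqrt{1-\rho}\}$, and since the $Z_{i}$'s are i.i.d.\ with cdf $F(0,1)$ under $H_{0}$ and independent of $U$, conditional independence yields
\begin{equation*}
\mathbb{P}\bigg(\bigcap_{i=1}^{n}\{X_{i}\leqslant c_{Bon}\}\,\Big|\,U=u\bigg) = F^{n}\!\left(\frac{c_{Bon}-\sqrt{\rho}\,u}{\sqrt{1-\rho}}\right).
\end{equation*}
Integrating against the density $g$ of $U$, I get
\begin{equation*}
1 - FWER_{Bon}(n,\alpha,\rho) \;=\; \int_{-\infty}^{\infty} F^{n}\!\left(\frac{c_{Bon}-\sqrt{\rho}\,u}{\sqrt{1-\rho}}\right) g(u)\,du.
\end{equation*}

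Next I would split this integral at the two thresholds $u=0$ and $u=c_{Bon}(1-d)/\sqrt{\rho}$, discard the (nonnegative) tail $u>c_{Bon}(1-d)/\sqrt{\rho}$, and lower-bound the integrand in each of the first two pieces using monotonicity of $F$. For $u\leqslant 0$ we have $(c_{Bon}-\sqrt{\rho}\,u)/\sqrt{1-\rho}\geqslant c_{Bon}/\sqrt{1-\rho}$, so the integrand is at least $F^{n}(c_{Bon}/\sqrt{1-\rho})$, contributing $G(0)\cdot F^{n}(c_{Bon}/\sqrt{1-\rho})$. For $0<u\leqslant c_{Bon}(1-d)/\sqrt{\rho}$ we have $c_{Bon}-\sqrt{\rho}\,u\geqslant c_{Bon}\cdot d$, so the integrand is at least $F^{n}(c_{Bon}\cdot d/\sqrt{1-\rho})$, contributing $[G(c_{Bon}(1-d)/\sqrt{\rho})-G(0)]\cdot F^{n}(c_{Bon}\cdot d/\sqrt{1-\rho})$. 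Summing the two lower bounds and rearranging gives exactly the stated inequality.

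There is essentially no hard step: the argument is just the decomposition $X_{i}=\sqrt{1-\rho}\,Z_{i}+\sqrt{\rho}\,U$ combined with conditional independence and the monotonicity of $F$. The only point that requires a little care is making sure the split point $c_{Bon}(1-d)/\sqrt{\rho}$ is meaningful (it is positive since $c_{Bon}>0$ for $\alpha<1/2$ in typical settings, and $d\in(0,1)$), and noting that the freedom to choose $d$ is what will later make the bound useful for the limiting analysis in subsequent sections (since letting $n\to\infty$ together with a suitable choice of $d$ drives the right-hand side to $0$ as long as $F^{n}(c_{Bon}\cdot d/\sqrt{1-\rho})\to 1$).
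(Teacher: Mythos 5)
Your proposal is correct and follows essentially the same route as the paper: condition on the common factor $U$, use conditional independence of the $Z_i$'s to get the integral $\int F^{n}\left((c_{Bon}-\sqrt{\rho}\,u)/\sqrt{1-\rho}\right)g(u)\,du$, split at $u=0$ and $u=c_{Bon}(1-d)/\sqrt{\rho}$, discard the upper tail, and lower-bound each remaining piece by the value of $F^{n}$ at the endpoint. The paper's proof is the same computation written out line by line, so there is nothing substantive to add.
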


\begin{proof}[\textbf{\upshape Proof of Theorem \ref{thm3.1new}.}]
Under the global null, we have
$$
\begin{aligned}
& FWER_{Bon}(n, \alpha, \rho) =\mathbb{P}_{F^{\star}}\bigg[X_{i} \geqslant c_{Bon} \quad \text {for at least one $i$}\bigg].
\end{aligned}
$$
\noindent Hence, 
\begingroup
\allowdisplaybreaks
\begin{align*}
& 1 - FWER_{Bon}(n, \alpha, \rho) \\
 = & \mathbb{P}_{F^{*}}\left(X_{i} \leqslant c_{Bon} \quad \forall i\right)\\
= & \mathbb{P}_{F^{*}}\left(\sqrt{1-\rho}Z_i + \sqrt{\rho}U \leqslant c_{Bon} \quad \forall i\right)\\
= & \mathbb{P}_{F^{*}}\left(Z_i \leqslant \frac{c_{Bon}-\sqrt{\rho}U}{\sqrt{1-\rho}} \quad \forall i\right)\\
= & \mathbb{E}_{U}\left[F^{n}\left(\frac{c_{Bon}-\sqrt{\rho} U}{\sqrt{1-\rho}}\right)\right] \\
= & \int_{-\infty}^{\infty} F^{n}\left(\frac{c_{Bon}-\sqrt{\rho} u}{\sqrt{1-\rho}}\right) g(u) d u \\
= & \int_{-\infty}^{0} F^{n}\left(\frac{c_{Bon}-\sqrt{\rho} u}{\sqrt{1-\rho}}\right) g(u) d u+\int_{0}^{\infty} F^{n}\left(\frac{c_{Bon}-\sqrt{\rho} u}{\sqrt{1-\rho}}\right) g(u) d u \\
\geqslant & \int_{-\infty}^{0} F^{n}\left(\frac{c_{Bon}}{\sqrt{1-\rho}}\right) g(u) d u+\int_{0}^{\infty} F^{n}\left(\frac{c_{Bon}-\sqrt{\rho} u}{\sqrt{1-\rho}}\right) g(u) d u \\
= & G(0) \cdot F^{n}\left(\frac{c_{Bon}}{\sqrt{1-\rho}}\right)+\int_{0}^{\infty} F^{n}\left(\frac{c_{Bon}-\sqrt{\rho} u}{\sqrt{1-\rho}}\right) g(u) d u \\
\geqslant & G(0) \cdot F^{n}\left(\frac{c_{Bon}}{\sqrt{1-\rho}}\right)+\int_{0}^{c_{Bon} \cdot \frac{(1-d)}{\sqrt{\rho}}} F^{n}\left(\frac{c_{Bon}-\sqrt{\rho} u}{\sqrt{1-\rho}}\right) g(u) d u \quad \text{(for any $d\in (0,1)$)}\\
\geqslant & G(0) \cdot F^{n}\left(\frac{c_{Bon}}{\sqrt{1-\rho}}\right)+\int_{0}^{c_{Bon} \cdot \frac{(1-\alpha)}{\sqrt{\rho}}} F^{n}\left(\frac{c_{Bon} \cdot d}{\sqrt{1-\rho}}\right) g(u) d u \\
= & G(0) \cdot F^{n}\left(\frac{c_{Bon}}{\sqrt{1-\rho}}\right)+\left[G\left(c_{Bon} \cdot \frac{1-d}{\sqrt{\rho}}\right)-G(0)\right] F^{n}\left(\frac{c_{Bon}\cdot d}{\sqrt{1-\rho}}\right).
\end{align*}
\endgroup
The rest follows.
\end{proof}

\begin{remark}
\autoref{thm3.1new} does not require any extra assumption on the densities $f$ or $g$. 
\end{remark}
\begin{remark}
\autoref{thm3.1new} provides us with the following surprising implication:
$$\lim_{n \rightarrow \infty} F^{n}\left(\frac{c_{Bon} \cdot d}{\sqrt{1-\rho}}\right)=1 \implies \lim_{n \rightarrow \infty} FWER_{Bon}(n, \alpha, \rho)=0.$$
\end{remark}
Under some extra conditions on $f$ and $g$, we prove $\displaystyle \lim_{n \rightarrow \infty} F^{n}\left(\frac{c_{Bon} \cdot d}{\sqrt{1-\rho}}\right)=1$. Towards this, we need the following lemma.
\begin{lemma}\label{lemma3.1new}
Consider the equicorrelated setup mentioned in Section 2.1 with $\alpha \in (0,1), \rho \in (0,1)$. We have, 
$$\log \left[\lim _{n \rightarrow \infty} F^{n}\left(\frac{c_{Bon}}{\sqrt{1-\rho}}\right)\right] = -\alpha \cdot\left[\lim _{n \rightarrow \infty} \frac{f\left(\frac{c_{Bon}}{\sqrt{1-\rho}}\right)}{\int_{-\infty}^{\infty} f\left(\frac{c_{Bon}-\sqrt{\rho} u}{\sqrt{1-\rho}}\right) g(u) d u}\right].$$

\end{lemma}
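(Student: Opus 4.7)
The plan is to take logarithms on the left-hand side, use the Taylor expansion $\log(1-x) = -x + O(x^2)$ near zero together with the defining relation $F^{\star}(c_{Bon}) = 1 - \alpha/n$, and reduce the resulting ratio of tail probabilities to a ratio of densities via L'Hôpital's rule.

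First I would observe that, since $c_{Bon} = (F^{\star})^{-1}(1-\alpha/n) \to \infty$ as $n \to \infty$, the value $F(c_{Bon}/\sqrt{1-\rho})$ tends to $1$. Therefore
$$
\log F^n\left(\frac{c_{Bon}}{\sqrt{1-\rho}}\right) = n \log F\left(\frac{c_{Bon}}{\sqrt{1-\rho}}\right) = -n\left[1 - F\left(\frac{c_{Bon}}{\sqrt{1-\rho}}\right)\right](1 + o(1)).
$$
Using $n = \alpha / (1 - F^{\star}(c_{Bon}))$ this rewrites as
$$
\log F^n\left(\frac{c_{Bon}}{\sqrt{1-\rho}}\right) = -\alpha \cdot \frac{1 - F(c_{Bon}/\sqrt{1-\rho})}{1 - F^{\star}(c_{Bon})} \cdot (1 + o(1)),
$$
so the claim reduces to evaluating the limit of this ratio as $t := c_{Bon} \to \infty$.

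Next I would apply L'Hôpital's rule to the $0/0$ form $\lim_{t\to\infty}(1 - F(t/\sqrt{1-\rho}))/(1 - F^{\star}(t))$, which yields
$$
\lim_{t \to \infty} \frac{(1/\sqrt{1-\rho})\, f(t/\sqrt{1-\rho})}{f^{\star}(t)}.
$$
To handle the denominator, I would differentiate $F^{\star}(t) = \int F((t-\sqrt{\rho}u)/\sqrt{1-\rho})\, g(u)\, du$ under the integral sign, obtaining $f^{\star}(t) = (1/\sqrt{1-\rho})\int f((t-\sqrt{\rho}u)/\sqrt{1-\rho})\, g(u)\, du$. Substituting cancels the $\sqrt{1-\rho}$ prefactor and produces exactly the expression on the right-hand side of the lemma, completing the derivation.

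The main obstacle is the rigorous justification of these analytic steps rather than any conceptual difficulty: one needs (i) L'Hôpital's rule to be applicable, which typically requires some eventual-monotonicity or regular-variation property of the tails of $f$, (ii) differentiation under the integral sign to be permissible, which is a standard dominated-convergence requirement on $f$, and (iii) the passage from the discrete sequence $c_{Bon}(n)$ to the continuous variable $t \to \infty$ to be legitimate. The statement should therefore be read in the extended sense that whenever the right-hand limit exists in $[0,\infty]$, the left-hand limit exists in $[-\infty,0]$ and equality holds.
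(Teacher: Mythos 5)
Your proposal is correct and reaches the same final reduction as the paper — a ratio of densities $f(c_{Bon}/\sqrt{1-\rho})/f^{\star}(c_{Bon})$, followed by the convolution identity $f^{\star}(v)=\tfrac{1}{\sqrt{1-\rho}}\int f\bigl(\tfrac{v-\sqrt{\rho}u}{\sqrt{1-\rho}}\bigr)g(u)\,du$ to cancel the $1/\sqrt{1-\rho}$ prefactor — but the route is executed differently. The paper writes $\log P_n=\log F(tc_{Bon})/(1/n)$ and applies L'H\^opital \emph{in the variable $n$}, treating $n$ as continuous and differentiating the implicitly defined function $n\mapsto c_{Bon}(n)$ via $f^{\star}(c_{Bon})\cdot\tfrac{d}{dn}c_{Bon}=\alpha/n^{2}$. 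You instead linearize the logarithm ($\log F=-(1-F)(1+o(1))$), substitute $n=\alpha/(1-F^{\star}(c_{Bon}))$, and apply L'H\^opital \emph{in the threshold variable $t=c_{Bon}\to\infty$} to the survival-function ratio $(1-F(t/\sqrt{1-\rho}))/(1-F^{\star}(t))$. Your version is arguably cleaner in that it never needs $c_{Bon}$ to be a differentiable function of a continuous index $n$, and it makes explicit where the defining relation $F^{\star}(c_{Bon})=1-\alpha/n$ enters; the paper's version is more compact but buries the same regularity requirements (applicability of L'H\^opital, $f^{\star}>0$ near the upper tail, differentiation under the integral sign) inside the formal manipulation. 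Both proofs share the caveat you state at the end — the identity should be read as holding whenever the right-hand limit exists in $[0,\infty]$ — and your explicit acknowledgment of the discrete-to-continuous passage for the sequence $c_{Bon}(n)$ is a point the paper glosses over.
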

\begin{proof}[\textbf{\upshape Proof of Lemma \ref{lemma3.1new}.}]
\noindent Suppose $P_{n}=F^{n}\left(c_{Bon} \cdot t\right)$ where $t=1/\sqrt{1-\rho}$. We wish to find $\displaystyle \lim _{n \rightarrow \infty} P_{n}$. Now,
\begin{align*}
\log \left[\lim _{n \rightarrow \infty} P_{n}\right] & =\lim _{n \rightarrow \infty}\left(\log P_{n}\right) \\
& =\lim _{n \rightarrow \infty} \big[n \cdot \log \left(F\left(c_{Bon} \cdot t\right)\right)\big] \\
& =\lim _{n \rightarrow \infty} \frac{\log \left(F\left(c_{Bon}\cdot t\right)\right)}{\frac{1}{n}} \\
& =\lim _{n \rightarrow \infty} \frac{h_{1}(n)}{h_{2}(n)} \quad (\text{say}) \\
& =\lim _{n \rightarrow \infty} \frac{h_{1}^{\prime}(n)}{h_{2}^{\prime}(n)} \quad \left(\text{since $h_{1},h_{2} \rightarrow 0$ as $n \rightarrow \infty$}\right) \\
& =\lim _{n \rightarrow \infty} \frac{\frac{1}{F\left(t c_{Bon}\right)} \cdot f\left(t c_{Bon}\right) \cdot t \cdot \frac{d}{d n}\left(c_{Bon}\right)}{-\frac{1}{n^{2}}} \\
& =\lim _{n \rightarrow \infty}-n^{2} t \cdot \frac{f\left(t c_{Bon}\right)}{F\left(t c_{Bon}\right)} \cdot \frac{\alpha}{n^{2} \cdot f^{*}\left(c_{Bon}\right)} \quad \left(\text{since $f^{*}\left(c_{Bon}\right)\cdot \frac{d}{d n}\left(c_{Bon}\right) =\alpha/n^2$}\right)\\
& =\lim _{n \rightarrow \infty}-\alpha t \cdot \frac{f\left(t c_{Bon}\right)}{f^{*}\left(c_{Bon}\right)} \\
& =-\alpha t \cdot \lim _{n \rightarrow \infty} \frac{f\left(t c_{Bon}\right)}{f^{*}\left(c_{Bon}\right)}.
\end{align*}
\noindent The rest follows once one observes that
$$
f^{*}(v)=\frac{1}{\sqrt{1-\rho}} \int_{-\infty}^{\infty} f\left(\frac{v-\sqrt{\rho} u}{\sqrt{1-\rho}}\right) g(u) d u.$$
\end{proof}
\noindent We are now in a position to state the foremost result of this paper:
\begin{theorem}\label{thm3.2new}
Consider the equicorrelated setup mentioned in Section 2.1 with $\alpha \in (0,1), \rho \in (0,1)$. Under the global null hypothesis, we have
$$\lim_{n \rightarrow 0} FWER_{Bon}(n, \alpha, \rho) = 0$$
if the following two conditions are satisfied:

\noindent (i) There exists positive constant $a$ such that $G(a)<1$. 

\noindent (ii) The density $f$ is non-increasing on $\mathbb{R}^{+}$ such that
$$\lim _{x \rightarrow \infty} \frac{f(x)}{f(x-b)}=0 \quad \forall b>0.$$
\end{theorem}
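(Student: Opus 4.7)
The plan is to apply Theorem \ref{thm3.1new} with an $n$-dependent choice of the parameter $d$, combined with a mild extension of Lemma \ref{lemma3.1new} to arguments of the form $(c_{Bon}-K)/\sqrt{1-\rho}$ (an additive shift rather than a multiplicative rescaling). Setting $d = d_n := 1 - K/c_{Bon}$ for a free parameter $K > 0$ keeps the offset $c_{Bon}(1-d_n)/\sqrt{\rho} = K/\sqrt{\rho}$ bounded; if one instead tried a fixed $d \in (0,1)$, this offset would diverge, and the natural lower bound on $f^*(c_{Bon})$ derived below would be too weak to control the numerator $f(d c_{Bon}/\sqrt{1-\rho})$ that enters via the Lemma.

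First I would establish the following strengthening of Lemma \ref{lemma3.1new}: for every fixed $K \geq 0$,
$$\log \lim_{n\to\infty} F^n\!\left(\frac{c_{Bon}-K}{\sqrt{1-\rho}}\right) = -\frac{\alpha}{\sqrt{1-\rho}} \lim_{n\to\infty} \frac{f((c_{Bon}-K)/\sqrt{1-\rho})}{f^*(c_{Bon})}.$$
The derivation is the same L'H\^opital calculation as in the proof of Lemma \ref{lemma3.1new}, with $c_{Bon}/\sqrt{1-\rho}$ replaced by $(c_{Bon}-K)/\sqrt{1-\rho}$ and using $dc_{Bon}/dn = \alpha/(n^2 f^*(c_{Bon}))$. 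It therefore suffices to prove that $f((c_{Bon}-K)/\sqrt{1-\rho})/f^*(c_{Bon}) \to 0$ whenever $K$ is admissible in the sense described next.

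To lower-bound $f^*(c_{Bon})$ I exploit condition (i) to pick $a > 0$ with $G(a) < 1$, choosing $a$ so that $\sqrt{\rho}\,a > K$ (possible whenever $K < \sqrt{\rho}\,a^*$, where $a^* := \sup\{a : G(a) < 1\}$), and by continuity of $G$ pick $a' > a$ with $G(a') > G(a)$. Restricting the integral
$$f^*(c_{Bon}) = \frac{1}{\sqrt{1-\rho}}\int f\!\left(\frac{c_{Bon}-\sqrt{\rho}\,u}{\sqrt{1-\rho}}\right) g(u)\, du$$
to $u \in [a, a']$ (where for large $n$ the argument of $f$ is positive) and invoking the monotonicity of $f$ from condition (ii), one obtains
$$f^*(c_{Bon}) \geq \frac{G(a')-G(a)}{\sqrt{1-\rho}}\, f\!\left(\frac{c_{Bon}-\sqrt{\rho}\,a}{\sqrt{1-\rho}}\right).$$
The ratio in the generalised lemma is thus bounded by a constant multiple of $f(x)/f(x-b)$ with $x = (c_{Bon}-K)/\sqrt{1-\rho}\to\infty$ and $b = (\sqrt{\rho}\,a - K)/\sqrt{1-\rho}>0$ fixed, and condition (ii) sends this to $0$. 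In particular, taking $K=0$ already gives $F^n(c_{Bon}/\sqrt{1-\rho})\to 1$, and for every $K \in [0,\sqrt{\rho}\,a^*)$ one also has $F^n((c_{Bon}-K)/\sqrt{1-\rho})\to 1$.

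To conclude I apply Theorem \ref{thm3.1new} with $d = d_n = 1 - K/c_{Bon}\in(0,1)$ for large $n$. Since then $c_{Bon}(1-d_n)/\sqrt{\rho} = K/\sqrt{\rho}$ and $d_n c_{Bon}/\sqrt{1-\rho} = (c_{Bon}-K)/\sqrt{1-\rho}$, the bound reads
$$FWER_{Bon}(n,\alpha,\rho) \leq 1 - G(0) F^n(c_{Bon}/\sqrt{1-\rho}) - [G(K/\sqrt{\rho})-G(0)]\, F^n((c_{Bon}-K)/\sqrt{1-\rho}).$$
Both $F^n$ factors tend to $1$ by the previous paragraph, so $\limsup_n FWER_{Bon}(n,\alpha,\rho) \leq 1 - G(K/\sqrt{\rho})$. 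Letting $K\uparrow \sqrt{\rho}\,a^*$ (or $K\to\infty$ when $a^*=\infty$) makes $G(K/\sqrt{\rho})\to 1$ by continuity of $G$ and the definition of $a^*$, whence $\limsup_n FWER_{Bon}(n,\alpha,\rho)=0$.

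The main obstacle I expect is precisely the reparametrisation just invoked. For any \emph{fixed} $d \in (0,1)$, the above lower bound places $f$ only an $O(1)$ distance below $c_{Bon}/\sqrt{1-\rho}$, whereas $d c_{Bon}/\sqrt{1-\rho}$ sits a distance $(1-d)c_{Bon}/\sqrt{1-\rho}\to\infty$ below that same point; the non-increasing property of $f$ then runs the wrong way and condition (ii) cannot be applied. Allowing $d = d_n \to 1$, while treating the resulting shift $K$ as a free parameter that is afterwards driven up to the essential supremum of $G$, is what converts Theorem \ref{thm3.1new}'s finite-sample inequality into an asymptotic zero.
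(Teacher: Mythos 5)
Your proof is correct, and up to the choice of perturbation it follows the same skeleton as the paper's argument: lower-bound $f^{*}(c_{Bon})$ by restricting the mixture integral to a $u$-interval of positive $G$-mass, pull $f$ out using the monotonicity in (ii), combine the L'H\^opital identity of Lemma \ref{lemma3.1new} with condition (ii) to get $F^{n}(\cdot)\to 1$, and feed this into \autoref{thm3.1new}. Where you genuinely diverge is the treatment of the second factor $F^{n}\left(dc_{Bon}/\sqrt{1-\rho}\right)$. The paper keeps $d$ fixed, rewrites $dc_{Bon}/\sqrt{1-\rho}=c_{Bon}/\sqrt{1-\rho(1-\epsilon)}$, and invokes \eqref{limitcbon} ``with $\rho$ replaced by $\rho(1-\epsilon)$''; that substitution silently keeps $c_{Bon}$ and $f^{*}$ attached to the original $\rho$, which is harmless in the Gaussian case (where $F^{\star}=\Phi$ for every $\rho$) but is exactly the step you flag for general $F,G$: with fixed $d$ the evaluation point sits a distance of order $c_{Bon}$ below the point at which $f^{*}(c_{Bon})$ was lower-bounded, so the monotonicity of $f$ runs the wrong way and condition (ii) cannot be brought to bear. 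Your fix --- take $d_{n}=1-K/c_{Bon}$ so that the shift is the bounded constant $K$, prove the additive-shift version of Lemma \ref{lemma3.1new}, deduce $\limsup_{n} FWER_{Bon}(n,\alpha,\rho)\le 1-G(K/\sqrt{\rho})$ for every $K<\sqrt{\rho}\,a^{*}$ with $a^{*}=\sup\{a: G(a)<1\}$, and only then let $K\uparrow\sqrt{\rho}\,a^{*}$ --- is a clean and, for general $F$ and $G$, more robust route; the price is the extra limiting argument in $K$ at the end, which needs only the continuity of $G$ (guaranteed since $g$ is assumed to exist). One cosmetic point: the existence of $a'>a$ with $G(a')>G(a)$ follows from $G(a)<1$ together with $G(x)\to 1$ as $x\to\infty$, not from continuity of $G$.
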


The conditions imposed on $f$ is satisfied by a wide range of densities, including the normal distribution, the generalized normal distributions with any shape parameter not less than 2, and a wide class of distributions from the exponential family.

\begin{proof}[\textbf{\upshape Proof of \autoref{thm3.2new}.}]
We have, 
$$
\begin{aligned}
& \int_{-\infty}^{\infty} f\left(\frac{c_{Bon}-\sqrt{\rho} u}{\sqrt{1-\rho}}\right) g(u) d u \\
\geqslant & \int_{a}^{\frac{C_{Bon}}{\sqrt{\rho}}} f\left(\frac{c_{Bon}-\sqrt{\rho} u}{\sqrt{1-\rho}}\right) g(u) d u \quad \text{(for all $a \geq 0$)}\\
\geqslant & \int_{a}^{\frac{C_{Bon}}{\sqrt{\rho}}} f\left(\frac{c_{Bon}-\sqrt{\rho} a}{\sqrt{1-\rho}}\right) g(u) d u \quad \text{(since $f$ is non-increasing on $\mathbb{R}^{+}$)}\\
= & f\left(\frac{c_{Bon}-\sqrt{\rho} a}{\sqrt{1-\rho}}\right) \cdot \bigg[G \left(\frac{c_{Bon}}{\sqrt{\rho}} \right)-G(a)\bigg].
\end{aligned}
$$
Under the first condition of \autoref{thm3.2new}, we have,
$$\lim_{n \rightarrow \infty} \frac{f\left(\frac{c_{Bon}}{\sqrt{1-\rho}}\right)}{\int_{-\infty}^{\infty} f\left(\frac{c_{Bon}-\sqrt{\rho} u}{\sqrt{1-\rho}}\right) g(u) d u} \leq \beta \cdot \lim_{n \to \infty} \frac{f\left(\frac{c_{Bon}}{\sqrt{1-\rho}}\right)}{f\left(\frac{c_{Bon}-\sqrt{\rho} a}{\sqrt{1-\rho}}\right)}$$
for some positive constant $\beta$. 

The quantity on the right side of the above inequality is zero from the second condition of \autoref{thm3.2new}. Hence, the quantity on the left is also zero. Then, invoking Lemma \ref{lemma3.1new}, we obtain \begin{equation}\displaystyle \lim_{n \rightarrow \infty} F^{n}\left(\frac{c_{Bon}}{\sqrt{1-\rho}}\right)=1, \quad \forall \rho \in (0,1).\label{limitcbon}\end{equation}

Let us now take $d = \sqrt{\frac{1-\rho}{1-\rho(1-\epsilon)}}$ for $\epsilon \in (0,1)$. Then, 
\begin{equation}\label{limitcbon2}\lim_{n \rightarrow \infty} F^{n}\left(\frac{c_{Bon} \cdot d}{\sqrt{1-\rho}}\right) = \lim_{n \rightarrow \infty} F^{n}\left(\frac{c_{Bon}}{\sqrt{1-\rho(1-\epsilon)}}\right) =1 \quad (\text{from \eqref{limitcbon}})\end{equation}

Taking the limit in \autoref{thm3.1new} and then applying equations \eqref{limitcbon} and \eqref{limitcbon2} in it, we obtain the desired limiting zero result. \end{proof}

\begin{remark}
Consider the special case that $U$ has support on $\mathbb{R}^{+}$. Then, $G(0)=0$ and \autoref{thm3.1new} gives us, for each $d \in (0,1)$,
$$
FWER_{Bon}(n, \alpha, \rho) \leqslant 1 - G\left(c_{Bon} \cdot \frac{1-d}{\sqrt{\rho}}\right) \cdot F^{n}\left(\frac{c_{Bon} \cdot d}{\sqrt{1-\rho}}\right)
$$
under the global null hypothesis. Hence, \autoref{thm3.2new} holds even when support of $U$ is a subset of $\mathbb{R}^{+}$. 
\end{remark}
\begin{remark}
In proving \autoref{thm3.2new}, we have actually showed that the upper bound on $FWER_{Bon}(n, \alpha, \rho)$ mentioned in \autoref{thm3.1new} converges to zero as $n \to \infty$ for any $d \in (\sqrt{1-\rho},1)$. Hence, \autoref{thm3.1new} is a much stronger result than the ones mentioned in \cite{deycstm}, since the upper bounds mentioned therein do not converge to zero as $n \to \infty$. 
\end{remark} 

We extend \autoref{thm3.2new} to arbitrary configurations of the null hypotheses:
\begin{theorem}\label{thm3.4new}
Consider the equicorrelated setup mentioned in Section 2.1 with $\alpha \in (0,1), \rho \in (0,1)$. Under any configuration of true and false null hypotheses, we have
$$\lim_{n \rightarrow 0} FWER_{Bon}(n, \alpha, \rho) = 0$$
if the following two conditions are satisfied:

\noindent (i) There exists positive constant $a$ such that $G(a)<1$. 

\noindent (ii) The density $f$ is non-increasing on $\mathbb{R}^{+}$ such that
$$\lim _{x \rightarrow \infty} \frac{f(x)}{f(x-b)}=0 \quad \forall b>0.$$
\end{theorem}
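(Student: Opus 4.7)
The plan is to reduce the arbitrary-configuration case to the global-null analysis already carried out in \autoref{thm3.2new}. The crucial observation is that only the test statistics attached to the true-null indices enter the Bonferroni FWER: with $\mathcal{A}$ denoting the set of true nulls and $n_0 = |\mathcal{A}|$,
\begin{equation*}
FWER_{Bon}(n, \alpha, \rho) = \mathbb{P}\!\left(\bigcup_{i \in \mathcal{A}} \{X_i > c_{Bon}\}\right),
\end{equation*}
and for every $i \in \mathcal{A}$ we have $\mu_i = 0$, so $X_i = \sqrt{1-\rho}\, Z_i + \sqrt{\rho}\, U$ with $Z_i \sim F(0,1)$. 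The true-null sub-collection thus has exactly the same equicorrelated structure as in the global-null setup of Section 2, only with $n_0$ variables rather than $n$. The Bonferroni critical value $c_{Bon} = (F^{\star})^{-1}(1 - \alpha/n)$ nevertheless stays tied to the full count $n$.

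Conditioning on $U$ and repeating verbatim the chain of inequalities that produced \autoref{thm3.1new}, but with the intersection restricted to $\mathcal{A}$, I would obtain, for every $d \in (0,1)$, the analog bound
\begin{equation*}
FWER_{Bon}(n, \alpha, \rho) \leq 1 - G(0)\, F^{n_0}\!\left(\frac{c_{Bon}}{\sqrt{1-\rho}}\right) - \left[G\!\left(c_{Bon} \cdot \frac{1-d}{\sqrt{\rho}}\right) - G(0)\right] F^{n_0}\!\left(\frac{c_{Bon}\, d}{\sqrt{1-\rho}}\right).
\end{equation*}
This is a line-by-line imitation of the derivation of \autoref{thm3.1new}, so the step requires no new ideas.

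The final task is to show that each $F^{n_0}$ factor converges to $1$. Since $F(x) \in [0,1]$ and $n_0 \leq n$, we have the deterministic sandwich $F(x)^{n} \leq F(x)^{n_0} \leq 1$ at every $x$. The proof of \autoref{thm3.2new} has already established $\lim_{n \to \infty} F^{n}\bigl(c_{Bon}/\sqrt{1-\rho}\bigr) = 1$ and, for any $d \in (\sqrt{1-\rho}, 1)$, also $\lim_{n \to \infty} F^{n}\bigl(c_{Bon}\, d/\sqrt{1-\rho}\bigr) = 1$. By the squeeze, the same two limits hold with $n_0$ in place of $n$, regardless of how the sequence $n_0 = n_0(n)$ behaves. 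Combined with $G\bigl(c_{Bon}(1-d)/\sqrt{\rho}\bigr) \to 1$ (forced by $c_{Bon} \to \infty$), the displayed upper bound tends to $1 - G(0) - (1 - G(0)) = 0$, which is the claim.

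I do not anticipate a substantive obstacle: the analytical content is already packaged inside \autoref{thm3.2new}, and the generalization amounts to careful bookkeeping around the fact that $c_{Bon}$ still depends on $n$ while the exponent has become $n_0$. The monotonicity of $t \mapsto F(x)^{t}$ on $[0,1]$ handles this mismatch automatically. A fully elementary alternative, should one wish to sidestep the squeeze, is a two-case split: if $n_0$ stays bounded along a subsequence, Boole's inequality alone yields $FWER_{Bon}(n,\alpha,\rho) \leq n_0 \cdot \alpha/n \to 0$ along it; otherwise the argument above applies.
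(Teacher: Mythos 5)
Your proof is correct and follows essentially the same route as the paper: both reduce the arbitrary configuration to the global-null case via the inequality $F^{n_0}(\cdot) \geq F^{n}(\cdot)$ (valid since $0 \leq F(\cdot) \leq 1$ and $1 \leq n_0 \leq n$) and then invoke the limits already established for \autoref{thm3.2new}. The paper applies this monotonicity directly to $1 - FWER_{Bon}(n,\alpha,\rho) = \mathbb{E}_U\left[F^{n_0}\left(\frac{c_{Bon}-\sqrt{\rho}U}{\sqrt{1-\rho}}\right)\right]$ rather than re-deriving the \autoref{thm3.1new}-style bound with exponent $n_0$ and squeezing term by term, but the content is the same.
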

\begin{proof}[\textbf{\upshape Proof of \autoref{thm3.4new}}]
Without loss of generality, we may assume that the set of true null hypotheses $\mathcal{A}$ has cardinality $n_{0}$. Now, 
\begin{align*}
1-FWER_{Bon}(n, \alpha, \rho)=&\mathbb{P}\left(X_{i} \leqslant c_{Bon} \quad \forall i\in \mathcal{A}\right)\\
=& \mathbb{E}_{U}\left[F^{n_0}\left(\frac{c_{Bon}-\sqrt{\rho} U}{\sqrt{1-\rho}}\right)\right] \\
\geq & \mathbb{E}_{U}\left[F^{n}\left(\frac{c_{Bon}-\sqrt{\rho} U}{\sqrt{1-\rho}}\right)\right] \text{(since $0 \leqslant F(\cdot) \leqslant 1$ and $1 \leqslant n_{0} \leqslant n$)}\\
\longrightarrow & 1 \hspace{1.5mm} \text{as} \hspace{1.5mm} n\to \infty.
\end{align*}
The rest follows. \end{proof}

\subsection{Positive Limit of Bonferroni FWER}

Proceeding along the similar lines as in the proof of \autoref{thm3.1new}, we have
$$
\begin{aligned}
FWER_{Bon}(n, \alpha, \rho) & =1 -\int_{-\infty}^{\infty} F^{n}\left(\frac{c_{Bon}-\sqrt{\rho} u}{\sqrt{1-\rho}}\right) g(u) d u \\
& =1 -\int_{0}^{\infty} F^{n}\left(\frac{c_{Bon}-\sqrt{\rho} u}{\sqrt{1-\rho}}\right) g(u) d u-\int_{-\infty}^{0} F^{n}\left(\frac{c_{Bon}-\sqrt{\rho} u}{\sqrt{1-\rho}}\right) g(u) d u \\
& \geq 1 - F^{n}\left(\frac{c_{Bon}}{\sqrt{1-\rho}}\right) \cdot[1-G(0)]- G(0)\\
& = \bigg[1-F^{n}\left(\frac{c_{Bon}}{\sqrt{1-\rho}}\right)\bigg]\cdot [1-G(0)].
\end{aligned}$$
Hence, if we consider only those distributions $G$ for which $G(0)<1$, then, $\displaystyle \lim _{n \rightarrow \infty} F^{n}\left(\frac{c_{Bon}}{\sqrt{1-\rho}}\right)<1$ implies $\displaystyle \lim _{n \rightarrow \infty}FWER_{Bon}(n, \alpha, \rho)>0$. Therefore, our present goal (in this subsection) is to find distributions $F$ for which
$$
\lim _{n \rightarrow \infty} F^{n}\left(\frac{c_{Bon}}{\sqrt{1-p}}\right)<1.
$$
Towards this, we shall utilize \textit{Cantelli's Inequality}. This states that for any real-valued random variable $X$ with finite variance $\sigma^{2}$, 
$$\mathbb{P}[X\geqslant \lambda + E(X)] \leqslant \frac{\sigma^{2}}{\sigma^{2}+\lambda^{2}} \quad \forall \lambda>0.$$
We apply the above inequality on the test statistic $X_{i}:=\sqrt{1-\rho} Z_{i}+\sqrt{\rho} U$ with $\lambda=\sqrt{n/\alpha}$. Using Cantelli's inequality, we obtain
$$
\begin{aligned}
& \mathbb{P}[X_i \geqslant \sqrt{n/\alpha}+\mu_i] \leqslant \frac{\alpha}{n} \\
\implies & \mathbb{P}[X_i \leqslant \sqrt{n/\alpha}+\mu_i] \geqslant 1-\frac{\alpha}{n} \\
\implies & F^{\star}\left(\sqrt{n/\alpha}+\mu_i\right) \geqslant 1-\frac{\alpha}{n} \\
\implies & \sqrt{n/\alpha}+\mu_i \geqslant c_{Bon}\\
\implies & \sqrt{n/\gamma} \geqslant c_{Bon} \quad \text{(for some $\gamma \in (0,\alpha)$)}.
\end{aligned}$$
Therefore, 
$$
\begin{aligned}
FWER_{Bon}(n, \alpha, \rho) 
& \geq \bigg[1-F^{n}\left(\frac{c_{Bon}}{\sqrt{1-\rho}}\right)\bigg]\cdot [1-G(0)] \\
& \geq \bigg[1-F^{n}\left(\sqrt{\frac{n}{\gamma(1-\rho)}}\right)\bigg]\cdot [1-G(0)].
\end{aligned}$$
Hence, it is enough to find a cdf $F$ for which $\displaystyle \lim_{n \to \infty} F^{n}\left(\sqrt{\frac{n}{\gamma(1-\rho)}}\right) < 1$. Towards this, we consider the density of the Pareto distribution:
$$
f_{Z}(z)=\left\{\begin{array}{cl}
\frac{b \eta^{b}}{z^{b+1}} & z \geqslant \eta \\
0 & \text{else}.
\end{array}\right.
$$
Take $b=2+\delta$ and $\eta=\sqrt{\frac{\delta}{2+\delta}} \cdot(\delta+1)$ with $\delta>0$. Evidently, $Z$ has variance 1. The cdf of $Z$ is given by
$$
\begin{aligned}
& F_{Z}(z)=\left\{\begin{array}{cc}
1-\eta^{b} \cdot z^{-b} & z \geq \eta \\
0 & \text{else}.
\end{array}\right. \\
\end{aligned}
$$
This implies
$$
\begin{aligned}
F^{n}\left(\sqrt{\frac{n}{\gamma(1-\rho)}}\right) & =\left(1-\frac{d}{n^{1+\frac{\delta}{2}}}\right)^{n} \quad (\text{here $d:=\eta^{1+\delta/2}$ is a constant})\\
& \geqslant\left(1-\frac{d}{n^{1+\frac{\delta}{2}}}\right)^{n^{1+\frac{\delta}{2}}} \displaystyle \underset{n \to \infty}{\longrightarrow} e^{-d}.
\end{aligned}
$$

\noindent Hence, in this case, the limiting Bonferroni FWER is strictly positive.

\noindent We may summarize the findings of this subsection through the following result: 

\begin{theorem}\label{thm3.3new}
Consider the equicorrelated setup mentioned in Section 2.1 with $\alpha \in (0,1), \rho \in (0,1)$. Under the global null hypothesis, we have
$$\lim_{n \rightarrow 0} FWER_{Bon}(n, \alpha, \rho) > 0$$
if the following two conditions are satisfied:

\noindent (i) $G(0)<1$.

\noindent (ii)$\displaystyle \lim_{n \to \infty} F^{n}\left(\sqrt{\frac{n}{\gamma(1-\rho)}}\right) < 1$ for some $\gamma$ satisfying $\sqrt{n/\gamma} \geqslant c_{Bon}$. 
\end{theorem}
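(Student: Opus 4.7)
My approach is to derive a matching lower bound on $FWER_{Bon}(n,\alpha,\rho)$ by running the proof of \autoref{thm3.1new} with the inequalities reversed, and then eliminate the implicitly defined quantile $c_{Bon}$ via Cantelli's inequality so that condition (ii) can be applied directly. The structure mirrors \autoref{thm3.1new} but trades the role of the upper and lower half-lines of $U$.

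I would begin by conditioning on the common factor $U$, as in the proof of \autoref{thm3.1new}, to obtain the identity
\[
1 - FWER_{Bon}(n,\alpha,\rho) = \int_{-\infty}^{0} F^n\!\left(\tfrac{c_{Bon}-\sqrt{\rho}u}{\sqrt{1-\rho}}\right)g(u)\,du + \int_{0}^{\infty} F^n\!\left(\tfrac{c_{Bon}-\sqrt{\rho}u}{\sqrt{1-\rho}}\right)g(u)\,du.
\]
On $(-\infty,0)$ the integrand is trivially at most $g(u)$, contributing at most $G(0)$; on $(0,\infty)$ the monotonicity of the cdf $F$ together with $u>0$ lets me replace the argument by $c_{Bon}/\sqrt{1-\rho}$. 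Summing these two upper bounds on the pieces of $1-FWER_{Bon}$ produces the lower bound
\[
FWER_{Bon}(n,\alpha,\rho) \geq [1-G(0)]\bigg[1 - F^n\!\left(\tfrac{c_{Bon}}{\sqrt{1-\rho}}\right)\bigg],
\]
whose prefactor is strictly positive by condition (i).

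To convert the implicit $c_{Bon}$ into the explicit $\sqrt{n/\gamma}$ appearing in (ii), I would apply Cantelli's inequality to $X_i$ (which has mean $0$ and variance $1$ under $H_{0i}$) with $\lambda = \sqrt{n/\alpha}$, obtaining $F^{\star}(\sqrt{n/\alpha}) \geq 1-\alpha/n$ and hence $c_{Bon} \leq \sqrt{n/\alpha} \leq \sqrt{n/\gamma}$ for any $\gamma\in(0,\alpha]$. Monotonicity of $F$ then upgrades the display above to
\[
FWER_{Bon}(n,\alpha,\rho) \geq [1-G(0)]\bigg[1 - F^n\!\left(\sqrt{\tfrac{n}{\gamma(1-\rho)}}\right)\bigg],
\]
and the strict positivity of $\lim_{n\to\infty} FWER_{Bon}$ follows immediately by combining conditions (i) and (ii).

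The chief technical care point is orienting the inequalities correctly when splitting the integral, so that the upper bounds on the two subintegrals become a \emph{lower} bound on $FWER_{Bon}$ rather than the upper bound of \autoref{thm3.1new}. A secondary but conceptually important step is to recognize that Cantelli's inequality, despite its weakness for light-tailed $F^{\star}$, supplies exactly the right $\sqrt{n}$-scale handle on $c_{Bon}$ to fit hypothesis (ii); sharper quantile control would be wasted here and would only narrow the class of $F$ covered by the theorem.
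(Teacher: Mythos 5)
Your proposal is correct and follows essentially the same route as the paper: the same conditioning on $U$, the same split of the integral at $0$ yielding the lower bound $[1-G(0)]\bigl[1-F^n\bigl(c_{Bon}/\sqrt{1-\rho}\bigr)\bigr]$, and the same use of Cantelli's inequality with $\lambda=\sqrt{n/\alpha}$ to replace $c_{Bon}$ by $\sqrt{n/\gamma}$ before invoking conditions (i) and (ii).
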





\section{Limiting Bonferroni FWER under General Positive Correlation\label{sec:4}}

A plethora of scientific avenues involves variables with complex correlation structures. This section makes an attempt to generalize the theoretical limiting zero results for the Bonferroni FWER under those circumstances. Towards this, we consider the same \textit{sequence model} setup mentioned in Section 2.1, but now  $\operatorname{\mathbb{C}orr}\left(X_{i}, X_{j}\right)=\rho_{ij}$ for $i \neq j$ where $\rho_{ij} \in (0,1]$. Let $\Sigma_{n}$ be the correlation matrix of $X_{1}, \ldots, X_{n}$. We also assume that 
the joint density $h(X_1, \ldots, X_n)$ is elliptically contoured \citep{SomeshD}, i.e., it is given by 

$$h(\mathbf{x}) = |\Sigma_n|^{-1 / 2} t\left(\mathbf{x}^{\prime} {\Sigma_{n}}^{-1} \mathbf{x}\right)$$
where $\int_0^{\infty} r^{n-1} t\left(r^2\right) d r<\infty$. 

\noindent The following result extends \autoref{thm3.1new} to this setup. 

\begin{theorem}\label{section4thm1} Suppose the joint density of $(X_{1}, \ldots, X_{n})$ is elliptically contoured. Moreover, let $\liminf \rho_{ij}=\delta>0$. Then, for any $\alpha \in (0,1)$,
$$\lim_{n \to \infty}FWER_{Bon}(n,\alpha,\Sigma_{n}) = 0$$
under any configuration of true and false null hypotheses, if the following two conditions are satisfied:

\noindent (i) There exists positive constant $a$ such that $G(a)<1$. 

\noindent (ii) The density $f$ is non-increasing on $\mathbb{R}^{+}$ such that
$$\lim _{x \rightarrow \infty} \frac{f(x)}{f(x-b)}=0 \quad \forall b>0.$$
\end{theorem}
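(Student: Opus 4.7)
The plan is to adapt the proof of Theorem \ref{thm2.3} (general correlated Normal case) to the elliptical setting: first reduce to the global-null case via the argument of Theorem \ref{thm3.4new}, then dominate the survival probability using a Slepian-type monotonicity for elliptically contoured distributions, and finally feed the resulting equicorrelated bound into Theorem \ref{thm3.2new}.

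First, as in the proof of Theorem \ref{thm3.4new}, I would restrict attention to the subvector $(X_i)_{i \in \mathcal{A}}$ indexed by the true nulls. Marginals of an elliptically contoured distribution are again elliptically contoured, and the pairwise correlations of this subvector still inherit the property $\liminf \rho_{ij} = \delta > 0$. Hence it is enough to show $\mathbb{P}_{\Sigma_n}(X_i \leq c_{Bon}\ \forall i \in \mathcal{A}) \to 1$. Next, fix any $\epsilon \in (0, \delta)$: by the liminf assumption, all but finitely many entries of the correlation submatrix restricted to $\mathcal{A}$ satisfy $\rho_{ij} \geq \delta - \epsilon =: \delta'$, so after discarding a bounded number of coordinates (which perturbs the probability by a bounded factor only, and does not affect the asymptotic limit) I may assume every off-diagonal entry is at least $\delta'$.

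At this stage I would invoke a Slepian-type monotonicity theorem for elliptically contoured distributions (e.g.\ Das Gupta, Eaton, Olkin, Perlman, Savage and Sobel, 1972, or Tong, 1990): for two elliptically contoured vectors sharing the same marginals and generator, upper-orthant probabilities are non-decreasing in each off-diagonal correlation. Consequently,
\begin{equation*}
\mathbb{P}_{\Sigma_n}\!\left(X_i \leq c_{Bon}\ \forall i \in \mathcal{A}\right) \;\geq\; \mathbb{P}_{\Gamma_{|\mathcal{A}|}^{\delta'}}\!\left(Y_i \leq c_{Bon}\ \forall i \in \mathcal{A}\right),
\end{equation*}
where $Y$ is elliptically contoured with equicorrelation $\delta'$ and the same marginal $F^{\star}$. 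Since the structural conditions (i) and (ii) on $G$ and $f$ are unchanged, Theorem \ref{thm3.2new} (and hence Theorem \ref{thm3.4new}) applies to the right-hand side, showing that it tends to $1$; this yields the desired $\lim_n FWER_{Bon}(n,\alpha,\Sigma_n)=0$.

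The main obstacle concentrates in the last step: Theorem \ref{thm3.2new} was actually proved using the sequence-model decomposition $X_i = \sqrt{1-\rho}\, Z_i + \sqrt{\rho}\, U$ with \emph{independent} $Z_i$'s, a representation that matches an equicorrelated elliptical joint distribution only in the Gaussian case. Outside the Gaussian case, one must re-derive the bound of Theorem \ref{thm3.1new} for equicorrelated elliptical vectors by working with the stochastic representation $X = R \cdot \Gamma^{1/2} U^{(n)}$ (with $U^{(n)}$ uniform on the unit sphere) and conditioning on the generating radius $R$; the non-increasing property and the tail-decay condition in (ii) must then be used pointwise in $R$ before integrating. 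Verifying that this conditional version of the argument goes through in full elliptical generality — and carefully checking the available formulations of the Slepian-type inequality for elliptical distributions — is the technical heart of the proof.
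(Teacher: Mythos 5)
Your proposal follows essentially the same route as the paper's proof: reduce to the block of indices whose pairwise correlations meet the threshold, compare the resulting lower-orthant probability with the equicorrelated matrix $\Gamma(\delta)$ via the Das Gupta et al.\ monotonicity theorem (the paper's \autoref{Somesh}), and then invoke the equicorrelated limit result (\autoref{thm3.4new}). Two remarks. First, where you ``discard'' the finitely many low-correlation coordinates and assert the limit is unaffected, the paper keeps them and charges them $(n-|\mathcal{M}_n|)\cdot\alpha/n \to 0$ by Boole's inequality, also replacing $c_{Bon}$ by $c_{Bon}'=(F^{\star})^{-1}(1-\alpha/|\mathcal{M}_n|)$ so that the retained block is exactly a Bonferroni FWER on $|\mathcal{M}_n|$ hypotheses; your version needs this bookkeeping made explicit, since dropping events from the union only gives a one-sided bound. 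Second, the obstacle you isolate at the end is genuine and is the most valuable part of your write-up: \autoref{thm3.1new}--\autoref{thm3.4new} are proved for the sequence model $X_i=\sqrt{1-\rho}\,Z_i+\sqrt{\rho}\,U$ with independent $Z_i$, which coincides with an equicorrelated elliptically contoured law essentially only in the Gaussian case, so the comparison inequality lands you on an equicorrelated \emph{elliptical} vector rather than on the model for which the equicorrelated theorem was established. The paper's own proof silently identifies the two when it writes $1-l(|\mathcal{M}_n|,\mathbf{a},\Gamma_{|\mathcal{M}_n|}(\delta))=FWER_{Bon}(|\mathcal{M}_n|,\alpha,\delta)$ and then cites \autoref{thm3.4new}; neither your sketch nor the paper actually carries out the conditional-on-the-radius re-derivation you correctly flag as necessary. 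So your proposal is faithful to the paper's argument, including its one unresolved step.
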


\noindent The following famous inequality due to \cite{SomeshD} will be useful towards proving this.

\begin{theorem}\label{Somesh}
Suppose the joint density of $(X_{1}, \ldots, X_{k})$ is elliptically contoured. Consider the quadrant probability
$$l(k, \mathbf{a},\Sigma)=\mathbb{P}_{\Sigma}\left[\bigcap_{i=1}^{k}\left\{X_{i} \leqslant a_{i}\right\}\right].$$ Let $\mathbf{R}=\left(\rho_{i j}\right)$ and $\mathbf{T}=\left(\tau_{i j}\right)$ be two correlation matrices. If $\rho_{i j} \geqslant \tau_{i j}$ holds for all $i, j$, then $l(k, \mathbf{a}, \mathbf{R}) \geq l(k, \mathbf{a}, \mathbf{T})$, i.e
$$
\mathbb{P}_{\Sigma=\mathbf{R}}\left[\bigcap_{i=1}^{k}\left\{X_{i} \leqslant a_{i}\right\}\right] \geqslant \mathbb{P}_{\Sigma=\mathbf{T}}\left[\bigcap_{i=1}^{k}\left\{X_{i} \leqslant a_{i}\right\}\right]
$$
holds for all $\mathbf{a}=\left(a_{1}, \ldots, a_{k}\right)^{\prime}$. 
\end{theorem}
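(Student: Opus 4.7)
The plan is to follow the classical Slepian-type interpolation argument, adapted to the elliptical setting. First I would reduce the claim, via a telescoping argument along a sequence of correlation matrices differing in one pair of off-diagonal entries at a time, to the following single-parameter statement: for a correlation matrix $\Sigma(s)$ that agrees with $\mathbf{T}$ everywhere except at the $(i,j)$ and $(j,i)$ entries, both set to $s$, the quadrant probability $l(k, \mathbf{a}, \Sigma(s))$ is non-decreasing in $s$ on the interval where $\Sigma(s)$ remains positive definite. Iterating this pairwise monotonicity along a path of positive-definite matrices from $\mathbf{T}$ to $\mathbf{R}$ would then deliver $l(k, \mathbf{a}, \mathbf{R}) \geq l(k, \mathbf{a}, \mathbf{T})$.

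Next I would differentiate $l(k, \mathbf{a}, \Sigma(s))$ with respect to $s$ under the integral sign; smoothness of $|\Sigma(s)|^{-1/2} t(\mathbf{x}' \Sigma(s)^{-1}\mathbf{x})$ in $s$ and integrability of its $s$-derivative (which decays in $\mathbf{x}$ at essentially the same rate as $h$, modulo polynomial factors from $\Sigma(s)^{-1}\mathbf{x}$) justify the interchange. The derivative becomes an integral of $\partial h/\partial s$ over the orthant $\prod_{\ell=1}^{k}(-\infty, a_\ell]$. In the Gaussian case one has the clean identity $\partial h/\partial \Sigma_{ij} = \partial^2 h/\partial x_i\partial x_j$ for $i \neq j$; integrating by parts twice (boundary contributions at $-\infty$ vanishing by rapid decay) reduces the derivative to the bivariate marginal density of $(X_i, X_j)$ at $(a_i, a_j)$ integrated against the joint density of the remaining coordinates on their lower orthant, which is manifestly non-negative.

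The main obstacle is that this exact Gaussian identity fails for a general elliptical generator $t$: a direct computation of $\partial h/\partial \Sigma_{ij}$ and $\partial^2 h/\partial x_i \partial x_j$ produces extra terms involving $t'(\cdot)$ and $t''(\cdot)$ that only cancel when $t$ is exponential. To handle this I would exploit the scale-mixture-of-Gaussians representation available for a broad subclass of elliptical distributions (those with completely monotone radial generator), writing $h(\mathbf{x}; \Sigma) = \int_0^\infty \phi_n(\mathbf{x}; v\Sigma)\, dF(v)$ for a mixing measure $F$ on $(0, \infty)$. Under this representation the quadrant probability becomes $\int_0^\infty \mathbb{P}_{v\Sigma}[\bigcap_{\ell}\{X_\ell \leq a_\ell\}]\, dF(v)$, and for each $v$ the inner probability is a rescaled Gaussian quadrant probability with correlation matrix $\Sigma$ and thresholds $a_\ell/\sqrt{v}$, to which the classical Gaussian Slepian inequality applies; the pointwise ordering in $v$ is preserved under integration against $dF$.

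For elliptical distributions that lie outside the Gaussian scale-mixture class, the fallback plan is to condition on all coordinates other than $x_i, x_j$ and invoke the standard fact that marginals and conditionals of an elliptically contoured vector are themselves elliptically contoured. This reduces the pairwise monotonicity to the bivariate case, where $\mathbb{P}[X_i \leq a_i, X_j \leq a_j]$ can be shown to be non-decreasing in the single correlation parameter by differentiating the explicit polar/spherical representation of bivariate elliptical densities and exploiting the sign of the resulting one-dimensional integrand. Combining the conditioning step with the bivariate monotonicity recovers the multivariate inequality in the generality stated.
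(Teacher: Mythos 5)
First, a point of reference: the paper does not prove this statement at all --- it is quoted as a known inequality of Das Gupta et al.\ (the reference \cite{SomeshD}) and then used as a black box, so there is no in-paper proof to compare against. I can only assess your argument on its own terms.

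Your architecture (interpolate between $\mathbf{T}$ and $\mathbf{R}$, reduce to monotonicity in a single off-diagonal entry, differentiate under the integral sign and integrate by parts) is the right one, but two steps need repair. First, moving from $\mathbf{T}$ to $\mathbf{R}$ by changing one pair of entries at a time can exit the positive-definite cone, so your intermediate ``correlation matrices'' need not exist; the standard fix is to differentiate along the segment $\Sigma(\lambda)=\lambda\mathbf{R}+(1-\lambda)\mathbf{T}$, which stays positive definite by convexity, and to write $\tfrac{d}{d\lambda}\,l$ as a non-negative combination of the partials $\partial l/\partial\sigma_{ij}$. Second, and more seriously, your two substitutes for the Gaussian identity do not together cover the stated generality: the Gaussian scale-mixture representation exists only for completely monotone generators (it excludes, e.g., compactly supported elliptical laws such as Pearson type II), while the conditioning fallback --- which is structurally sound, since the marginal of the remaining coordinates and the conditional location and scales of $(X_i,X_j)$ do not involve $\rho_{ij}$, only the conditional correlation does, and monotonically so --- ultimately rests on the bivariate monotonicity, which you assert rather than prove. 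The ingredient you narrowly missed is that Plackett's identity \emph{does} generalize, just with a different generator on the right-hand side: writing $h_{[t]}(\mathbf{x})=|\Sigma|^{-1/2}t(\mathbf{x}'\Sigma^{-1}\mathbf{x})$, a direct computation gives, for $i\neq j$,
\[
\frac{\partial}{\partial\sigma_{ij}}h_{[t]}(\mathbf{x})=\frac{\partial^{2}}{\partial x_{i}\,\partial x_{j}}h_{[\tilde t\,]}(\mathbf{x}),
\qquad \tilde t(s)=\frac{1}{2}\int_{s}^{\infty}t(u)\,du;
\]
the $t'$ and $t''$ terms that fail to cancel for $h_{[t]}$ itself cancel exactly for $h_{[\tilde t\,]}$, and in the Gaussian case $\tilde t=t$, recovering the classical identity. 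Since $\tilde t\geq 0$, integrating over the orthant and integrating by parts twice (your boundary-term argument applies unchanged) yields $\partial l/\partial\sigma_{ij}\geq 0$ for every elliptically contoured law with integrable generator, which is the inequality in full generality and is essentially the Das Gupta et al.\ proof. Your conditioning route can also be completed by applying this same identity in the bivariate conditional model, so the gap is repairable, but as written the proposal does not close it.
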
 

\begin{proof}[\textbf{\upshape Proof of \autoref{section4thm1}}]
This proof is quite similar to the proof of Theorem 4.3.4 of \cite{DeyThesis}. We prove \autoref{section4thm1} only under the global null since the proof under general configuration can be considered similarly as the preceding proof. 
For fixed $n \in \mathbb{N}$, suppose 
$$\mathcal{M}_n := \bigg\{i \in \{1, \ldots, n\} : \forall j\neq i, \rho_{ij} \geq \delta \bigg\}.$$
Now, 
\begin{align*}
& FWER_{Bon}(n,\alpha,\mathbf{\Sigma}) \\
= &
\mathbb{P}_{\Sigma_n}\bigg(\bigcup_{i=1}^{n}\{X_i > c_{Bon}\}\bigg) \\
= & \mathbb{P}_{\Sigma_n}\bigg(\bigcup_{i\in \mathcal{M}_n}\{X_i > c_{Bon}\}\bigcup \bigcup_{i\notin \mathcal{M}_n}\{X_i > c_{Bon}\}\bigg) \\
\leq & \mathbb{P}\bigg(\bigcup_{i\in \mathcal{M}_n}\{X_i > c_{Bon}\}\bigg)  + \mathbb{P}\bigg( \bigcup_{i\notin \mathcal{M}_n}\{X_i > c_{Bon}\}\bigg) \quad (\text{using Boole's inequality})\\
\leq & \mathbb{P}\bigg(\bigcup_{i\in \mathcal{M}_n}\{X_i > c_{Bon}\}\bigg) + [n-|\mathcal{M}_n|] \cdot \frac{\alpha}{n} \quad (\text{using Boole's inequality})\\
\leq & \mathbb{P}\bigg(\bigcup_{i\in \mathcal{M}_n}\{X_i > c_{Bon}^{\prime}\}\bigg) + [n-|\mathcal{M}_n|]  \cdot \frac{\alpha}{n} \quad (\text{where $c_{Bon}^{\prime}=(F^{\star})^{-1}(1-\alpha/|\mathcal{M}_n|)\leq c_{Bon}$})\\
= & 1-l(|\mathcal{M}_n|, \mathbf{a}, \mathbf{\Sigma}_{\mathcal{M}_n}) + [n-|\mathcal{M}_n|] \cdot \frac{\alpha}{n}
\end{align*}
where $l$ is as defined in \autoref{Somesh}, $\mathbf{\Sigma}_{\mathcal{M}_n}$ is the covariance matrix of $(X_{i} : i \in \mathcal{M}_n)$, and $a_{i}=c_{Bon}^{\prime}$ for $i \in \mathcal{M}_n$. 

\autoref{Somesh} gives  $l(|\mathcal{M}_n|, \mathbf{a},\Sigma_{\mathcal{M}_n}) \geq l(|\mathcal{M}_n|, \mathbf{a}, \Gamma_{|\mathcal{M}_n|}(\delta))$. So, 
\begin{align*}
 FWER_{Bon}(n,\alpha,\Sigma_{n}) & \leq 1 - l(|\mathcal{M}_n|, \mathbf{a}, \Gamma_{|\mathcal{M}_n|}(\delta))
 + [n-|\mathcal{M}_n|] \cdot \frac{\alpha}{n} \\
 & = FWER_{Bon}(|\mathcal{M}_n|,\alpha, \delta)
 + [n-|\mathcal{M}_n|] \cdot \frac{\alpha}{n}.
\end{align*}
Since $\liminf \rho_{ij}=\delta>0$, we also have $n-|\mathcal{M}_n|$ is finite. The rest follows from \autoref{thm3.4new} by taking $n \to \infty$. 
\end{proof}

\section{Limiting FWER of Step-down Procedures\label{sec:5}} 

Stepdown MTPs \citep{BL, BL2, Holm1979} usually enjoy better power than the single-step procedures, while still controlling FWER at $\alpha$. A $p$-value based step-down MTP utilizes a vector of cutoffs $\textbf{u} =\left(u_1, \ldots, u_n\right)$, where $0 \leq u_1 \leq \ldots \leq u_n \leq 1$. Let $M=\max \left\{i: P_{(j)} \leq u_j\right.$ for all $\left.j=1, \ldots, i\right\}$. Then the step-down procedure based on critical values $\textbf{u}$ rejects $H_{(01)}, \ldots, H_{(0M)}$. 







Holm method \citep{Holm1979} is a popular step-down MTP which uses $u_i=\alpha /(n-i+1), i=1, \ldots, n$. It controls the FWER under any dependence of the test statistics. \cite{deybhandari} showed the following on the limiting behavior of Holm FWER: 

\begin{theorem}\label{thm5.1} \citep{deybhandari}
Suppose $\mu^{\star} = \sup \mu_{i} < \infty$. Then, under any configuration of true and false null hypotheses, we have
$$\displaystyle \lim_{n \to \infty} FWER_{Holm}(n, \alpha, \rho) = 0 \quad \text{for all} \hspace{2mm} \alpha \in (0,1) \hspace{2mm} \text{and} \hspace{2mm} \rho \in (0,1].$$
\end{theorem}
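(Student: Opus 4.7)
The plan is to reduce the Holm FWER question to a one-step tail bound by exploiting that any Holm rejection whatsoever forces the smallest $p$-value to clear the tightest cutoff $\alpha/n$. Since $V_n(\text{Holm})\leq R_n(\text{Holm})$,
$FWER_{Holm}(n,\alpha,\rho)\leq \mathbb{P}\bigl(R_n(\text{Holm})\geq 1\bigr)=\mathbb{P}\bigl(\min_i P_i\leq \alpha/n\bigr)=\mathbb{P}\bigl(\max_i X_i>c_{Bon}\bigr),$
so it suffices to send the right-hand side to zero. The boundary case $\rho=1$ is trivial because all $X_i$ coincide with $U$, making the right-hand side $\alpha/n\to 0$.

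Next I would absorb the unknown means via the hypothesis $\mu^\star<\infty$. With $Y_i:=X_i-\mu_i$, the vector $(Y_1,\dots,Y_n)$ is zero-mean equicorrelated normal with the same $\rho$, and therefore admits the representation $Y_i=\sqrt{1-\rho}\,Z_i^{\prime}+\sqrt{\rho}\,U$, where $Z_i^{\prime}\stackrel{iid}{\sim}N(0,1)$ independent of $U\sim N(0,1)$. The elementary bound $\max_i X_i\leq \max_i Y_i+\mu^\star$ then gives
$\mathbb{P}\bigl(\max_i X_i>c_{Bon}\bigr)\leq \mathbb{P}\bigl(\max_i Y_i>c_{Bon}-\mu^\star\bigr) = 1-\mathbb{E}_U\!\left[\Phi^n\!\left(\frac{c_{Bon}-\mu^\star-\sqrt{\rho}\,U}{\sqrt{1-\rho}}\right)\right].$
This is the same conditioning-on-$U$ recipe used in the proofs of \autoref{thm3.1new} and \ref{thm2.2}, merely with a translated threshold.

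For each fixed $u\in\mathbb{R}$ the argument $t_n(u):=(c_{Bon}-\mu^\star-\sqrt{\rho}\,u)/\sqrt{1-\rho}$ behaves like $c_{Bon}/\sqrt{1-\rho}$, and since $c_{Bon}=\Phi^{-1}(1-\alpha/n)\sim\sqrt{2\log n}$, Mill's ratio delivers $1-\Phi(t_n(u))$ of order $n^{-1/(1-\rho)}$ up to polylogarithmic factors. Because $\rho>0$ forces $1/(1-\rho)>1$, we obtain $n\bigl(1-\Phi(t_n(u))\bigr)\to 0$, hence $\Phi^n(t_n(u))\to 1$ pointwise in $u$. The integrand is trivially bounded by $1$ with envelope $\phi(u)$, so dominated convergence yields $\mathbb{E}_U[\Phi^n(t_n(U))]\to 1$, closing the argument under any configuration of true and false nulls.

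The main obstacle is precisely this last tail estimate: one has to verify that the inflated cutoff $c_{Bon}/\sqrt{1-\rho}$ stays strictly above the typical scale $\sqrt{2\log n}$ at which $\max_i Z_i^{\prime}$ concentrates, and this is exactly where the strict positivity of $\rho$ is indispensable, mirroring the mechanism behind \autoref{thm2.2}. The false-null contribution is handled cheaply only because $\mu^\star<\infty$; without that hypothesis the translation $\max X_i\leq \max Y_i+\mu^\star$ collapses and a genuinely different argument would be required.
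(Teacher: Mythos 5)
Your proposal is correct and follows essentially the same route the paper uses for this family of results (see the proof of \autoref{thm5.3}): reduce to $\mathbb{P}(R_n(\mathrm{Holm})\geq 1)=\mathbb{P}(P_{(1)}\leq\alpha/n)=\mathbb{P}(\max_i X_i>c_{Bon})$, absorb the means through $\mu^{\star}<\infty$, condition on the common factor $U$ to get $1-\mathbb{E}_U[\Phi^n((c_{Bon}-\mu^{\star}-\sqrt{\rho}U)/\sqrt{1-\rho})]$, and finish with the Mill's-ratio tail estimate plus dominated convergence. The only nitpick is your $\rho=1$ aside: under a general configuration the $X_i$ equal $\mu_i+U$ rather than all coinciding with $U$, so the bound there is $\mathbb{P}(U>c_{Bon}-\mu^{\star})\to 0$ rather than exactly $\alpha/n$, which does not affect the conclusion.
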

\cite{deystpa} extends this result to any step-down MTP under some non-negatively correlated normal setups:

\begin{theorem}\label{thm5.2}\citep{deystpa}
Let $\Sigma_n$ be the correlation matrix of $X_1, \ldots, X_n$ with $(i,j)$’th entry $\rho_{ij}$ such that $\liminf \rho_{ij}=\delta>0$. Suppose $\sup \mu_{i}$ is finite and $T$ is any step-down MTP controlling FWER at level $\alpha \in (0,1)$. Then, for any $\alpha \in (0,1)$,
$$\lim_{n \to \infty}\mathbb{P}_{\Sigma_{n}}\bigg(R_{n}(T) \geq 1\bigg) = 0$$
under any configuration of true and false null hypotheses. Consequently, 
$$\lim_{n \to \infty}FWER_{T}(n,\alpha,\Sigma_{n}) = 0 \quad and \quad \lim_{n \to \infty} AnyPwr_{T}(n,\alpha,\Sigma_{n}) = 0.$$
\end{theorem}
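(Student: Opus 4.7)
The plan is to reduce the event $\{R_n(T) \geq 1\}$ to a Bonferroni-type max-of-statistics event via the FWER control hypothesis, and then bound the resulting probability by splitting along the true/false null partition, invoking the already-established Bonferroni limiting zero result on the true-null block and a separate tail estimate on the false-null block.

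For any step-down procedure $T$ with non-decreasing critical values $u_1 \leq \cdots \leq u_n$, $R_n(T) \geq 1$ holds iff $P_{(1)} \leq u_1$, equivalently $\max_{1 \leq i \leq n} X_i \geq c_1 := (F^{\star})^{-1}(1 - u_1)$. Any step-down procedure with valid FWER control at level $\alpha$ under arbitrary dependence satisfies the Holm bound $u_1 \leq \alpha/n$, so $c_1 \geq c_{Bon}$ and
\[
\mathbb{P}_{\Sigma_n}\bigl(R_n(T) \geq 1\bigr) \;\leq\; \mathbb{P}_{\Sigma_n}\Bigl(\max_{i \in \mathcal{A}} X_i \geq c_{Bon}\Bigr) + \mathbb{P}_{\Sigma_n}\Bigl(\max_{i \in \mathcal{A}^c} X_i \geq c_{Bon}\Bigr),
\]
where $\mathcal{A}$ is the set of true nulls in the current configuration. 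The first summand is exactly the Bonferroni FWER under the current configuration and tends to zero by \autoref{thm2.3}.

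For the second summand, imitating the proof of \autoref{section4thm1}, I would isolate the cofinite subset $\mathcal{M}_n \subseteq \{1,\ldots,n\}$ of indices whose pairwise correlations with all others are at least $\delta$; the finite residue $\mathcal{A}^c \setminus \mathcal{M}_n$ is absorbed by Boole's inequality with a standard normal tail bound. On $\mathcal{A}^c \cap \mathcal{M}_n$, Slepian's inequality (the normal incarnation of \autoref{Somesh}) dominates the probability by its value under the equicorrelated structure with common correlation $\delta$. In that reduced setup, writing $X_i = \sqrt{1-\delta}\,Z_i + \sqrt{\delta}\,U + \mu_i$ with i.i.d.\ standard normal $Z_i, U$, conditioning on $U$, and using $\mu^{\star} := \sup_i \mu_i < \infty$, a union bound gives
\[
\mathbb{P}\Bigl(\max_{i \in \mathcal{A}^c \cap \mathcal{M}_n} X_i \geq c_{Bon} \,\Big|\, U\Bigr) \;\leq\; 1 - \Phi^{n}\!\Bigl(\frac{c_{Bon} - \mu^{\star} - \sqrt{\delta}\,U}{\sqrt{1-\delta}}\Bigr),
\]
and using $c_{Bon} \sim \sqrt{2 \log n}$ together with the Mills ratio $1 - \Phi(x) \sim \phi(x)/x$, one computes $n\bigl[1 - \Phi\bigl((c_{Bon} - \mu^{\star})/\sqrt{1-\delta}\bigr)\bigr] = O\bigl(n^{-\delta/(1-\delta)}(\log n)^{-1/2}\bigr)$, so $\Phi^{n}(\cdot) \to 1$ pointwise in $U$ and dominated convergence produces the required limit zero.

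Combining the two estimates gives $\mathbb{P}_{\Sigma_n}(R_n(T) \geq 1) \to 0$, and the FWER and AnyPwr conclusions follow immediately from $V_n(T), S_n(T) \leq R_n(T)$. The most delicate ingredient is the Mills-ratio calculation above: the factor $1/\sqrt{1-\delta} > 1$ inflates the critical point just enough to defeat the linear-in-$n$ union bound, and the whole argument collapses if $\delta = 0$, since then the decay exponent $\delta/(1-\delta)$ vanishes.
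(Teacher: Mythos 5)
Your proof is correct and follows essentially the same route as the paper, which does not reprove \autoref{thm5.2} (it is cited from earlier work) but establishes its generalization \autoref{thm5.3} by exactly this template: Gordon's lemma forces $u_1 \le \alpha/n$, reducing $\{R_n(T)\ge 1\}$ to the Bonferroni max event $\{\max_i X_i \ge c_{Bon}\}$, which is then controlled via the Slepian-type comparison (\autoref{Somesh}) to the equicorrelated-$\delta$ structure, conditioning on the common factor $U$, and the $\mu^{\star}<\infty$ tail estimate with dominated convergence. Your additional split into true and false nulls, with \autoref{thm2.3} invoked on $\mathcal{A}$, is harmless but not needed, since the single bound $1-\mathbb{E}_U\bigl[\Phi^{n}\bigl((c_{Bon}-\mu^{\star}-\sqrt{\delta}\,U)/\sqrt{1-\delta}\bigr)\bigr]$ already absorbs both blocks at once.
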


Here we shall extend \autoref{thm5.2} to general positively correlated elliptically contoured setups. 

\begin{theorem}\label{thm5.3}
Suppose the joint density of $(X_{1}, \ldots, X_{n})$ is elliptically contoured. Moreover, let $\liminf \rho_{ij}=\delta>0$. Suppose $\sup \mu_{i}$ is finite and $T$ is any step-down MTP controlling FWER at level $\alpha \in (0,1)$. Then, for any $\alpha \in (0,1)$,
$$\lim_{n \to \infty}\mathbb{P}_{\Sigma_{n}}\bigg(R_{n}(T) \geq 1\bigg) = 0$$
under any configuration of true and false null hypotheses, if the following two conditions are satisfied:

\noindent (i) There exists positive constant $a$ such that $G(a)<1$. 

\noindent (ii) The density $f$ is non-increasing on $\mathbb{R}^{+}$ such that
$$\lim _{x \rightarrow \infty} \frac{f(x)}{f(x-b)}=0 \quad \forall b>0.$$

\end{theorem}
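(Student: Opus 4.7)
The approach is to mirror the normal-case argument of \autoref{thm5.2}, replacing the Slepian-type comparison with Das Gupta's inequality (\autoref{Somesh}) exactly as was done in the proof of \autoref{section4thm1}. First, I would reduce the event of interest to a single maximum-exceedance event. For any step-down MTP $T$ with critical values $u_1 \leq \cdots \leq u_n$, one has $\{R_n(T) \geq 1\} = \{P_{(1)} \leq u_1\} = \{\max_i X_i > c_T^{(1)}\}$, where $c_T^{(1)} := (F^\star)^{-1}(1 - u_1)$. Since $T$ controls FWER at level $\alpha$, its first critical value satisfies $u_1 \leq \alpha/n$, so $c_T^{(1)} \geq c_{Bon}$, and it suffices to bound $\mathbb{P}_{\Sigma_n}(\max_i X_i > c_{Bon})$.

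Second, I would absorb the general configuration of nulls by centering. Writing $W_i := X_i - \mu_i$ gives $W_i \sim F^\star(0,1)$ marginally; with $\mu^{\star} := \sup_i \mu_i < \infty$, we have $\{X_i > c_{Bon}\} \subseteq \{W_i > c_{Bon} - \mu^{\star}\}$, so
$$\mathbb{P}_{\Sigma_n}(R_n(T) \geq 1) \leq \mathbb{P}_{\Sigma_n}\Bigl(\max_{1 \leq i \leq n} W_i > c_{Bon} - \mu^{\star}\Bigr).$$
Because elliptically contoured distributions are translation-equivariant, $(W_1, \ldots, W_n)$ is again elliptically contoured with the same correlation matrix $\Sigma_n$.

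Third, I would reduce the correlation structure following the partition argument in the proof of \autoref{section4thm1}: split $\{1, \ldots, n\}$ into $\mathcal{M}_n = \{i : \rho_{ij} \geq \delta \text{ for all } j \neq i\}$ and its complement (whose cardinality stays bounded thanks to $\liminf \rho_{ij} = \delta > 0$), apply Boole's inequality, use \autoref{Somesh} to dominate the $\mathcal{M}_n$-contribution by its $\delta$-equicorrelated counterpart, and bound the $\mathcal{M}_n^c$-contribution by the finite Bonferroni sum $|\mathcal{M}_n^c| \cdot [1 - F^\star(c_{Bon} - \mu^{\star})]$, which vanishes. Finally, in the $\delta$-equicorrelated elliptically contoured setting, condition on the common factor $U$ exactly as in the proofs of \autoref{thm3.1new} and \autoref{thm3.4new}, with threshold $c_{Bon} - \mu^{\star}$ in place of $c_{Bon}$. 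The only new fact required is that $F^n\bigl((c_{Bon} - \mu^{\star}) \cdot d / \sqrt{1 - \delta}\bigr) \to 1$ for some $d \in (\sqrt{1 - \delta}, 1)$; this follows from the analogous statement in \autoref{thm3.2new} upon observing that $(c_{Bon} - \mu^{\star})/c_{Bon} \to 1$, so for any $d' \in (\sqrt{1-\delta}, d)$ we eventually have $(c_{Bon} - \mu^{\star}) \cdot d \geq c_{Bon} \cdot d'$, reducing the limit to one already handled under conditions (i) and (ii).

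The main obstacle will be the bookkeeping across three nested reductions---general configurations of nulls to the global null with shifted threshold, general positive correlation to equicorrelated $\delta$, and $c_{Bon}$ to $c_{Bon} - \mu^{\star}$---without introducing residual error terms that fail to vanish. The crucial observation that makes everything work is that $\mu^{\star}$ is a fixed constant while $c_{Bon} \to \infty$, so the constant shift is asymptotically negligible at the scale of the Bonferroni cutoff; together with the boundedness of $|\mathcal{M}_n^c|$, this ensures each residual tends to zero and yields the desired limit.
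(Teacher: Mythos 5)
Your proposal is correct and follows essentially the same route as the paper: reduce any step-down MTP to the Holm/Bonferroni event via Gordon's bound $u_1 \leq \alpha/n$ (\autoref{thm5.4}), compare $\Sigma_n$ to the $\delta$-equicorrelated matrix via \autoref{Somesh}, absorb the nonzero means through $\mu^{\star}<\infty$, and finish by conditioning on the common factor $U$ as in \autoref{thm3.1new}--\autoref{thm3.4new}. If anything, your explicit $\mathcal{M}_n$ partition handles the $\liminf \rho_{ij}=\delta$ hypothesis more carefully than the paper's proof of \autoref{thm5.3}, which applies \autoref{Somesh} directly to all of $\Sigma_n$, and your observation that $(c_{Bon}-\mu^{\star})/c_{Bon}\to 1$ makes explicit a limit the paper only cites by analogy.
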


Theorem \ref{thm5.3} is the most general result in this work result since it works under a quite general class of elliptically contoured distributions, covers a wide class of correlation structures, encompasses all step-down FWER controlling procedures and also accommodates any configuration of true and false null hypotheses. We establish Theorem \ref{thm5.3} using the following result:

\begin{theorem}\label{thm5.4} \citep{Gordon2008}
Let $T$ be a step-down MTP based on the set of cut-offs $\mathbf{u} \in \mathcal{S}^n$. If $FWER_{T} \leq \alpha<1$, then $u_i \leq \alpha /(n-i+1), i=1, \ldots, n$.
\end{theorem}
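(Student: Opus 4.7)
The plan is to establish the upper bound on the $i$-th cutoff by contradiction, constructing an explicit worst-case configuration together with a dependence structure for which the FWER exceeds $\alpha$ whenever $u_i$ violates the stated inequality. Fix any index $i \in \{1, \ldots, n\}$, suppose toward a contradiction that $u_i > \alpha/(n-i+1)$, and write $m := n-i+1$.

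First I would specialize the null configuration by taking exactly $i-1$ false nulls and $m$ true nulls, and placing the false nulls so deep into the alternative that their $p$-values are degenerate at $0$ (either via a direct choice of alternative distribution, or as a limit along a sequence of alternatives forcing each such $p$-value below any prescribed $\varepsilon>0$ almost surely). Then $P_{(1)}=\cdots=P_{(i-1)}=0$ a.s., so the first $i-1$ comparisons of the step-down procedure pass automatically. A false rejection therefore occurs if and only if $P_{(i)} \leq u_i$, and $P_{(i)} = \min_{1 \le k \le m} U_k$, where $U_1,\ldots,U_m$ are the true-null $p$-values, each marginally uniform on $[0,1]$.

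The key step is to exhibit a joint distribution on $(U_1,\ldots,U_m)$ with uniform marginals for which
\[
\mathbb{P}\Bigl(\min_{1 \le k \le m} U_k \le u_i\Bigr) > \alpha.
\]
For this I would use a cyclic-shift coupling: let $V \sim \mathrm{Unif}[0,1]$, set $t := \min(u_i,\, 1/m)$, and define $U_k := (V + (k-1)t) \bmod 1$ for $k=1,\ldots,m$. Translation-invariance of Lebesgue measure on $[0,1]$ makes each $U_k$ uniform on $[0,1]$. A direct computation shows that the events $\{U_k \le t\}$ correspond to disjoint sub-intervals of $V$, each of length $t$, whence $\mathbb{P}(\min_k U_k \le t) = mt$. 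Because $u_i > \alpha/m$, either $u_i > 1/m$ (giving $mt=1>\alpha$) or $\alpha/m < u_i \le 1/m$ (giving $mt = m u_i > \alpha$); in either case the displayed probability is at least $mt > \alpha$.

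Combining the two parts, under the constructed configuration $FWER_T > \alpha$, contradicting the hypothesis $FWER_T \le \alpha$. Hence $u_i \le \alpha/(n-i+1)$ for every $i$. The main subtlety I expect is the formal justification of the ``$p$-values identically zero'' step for the false nulls: this can be dispatched either by a limiting argument along alternatives whose test statistics diverge to $+\infty$, or simply by choosing an alternative whose support lies strictly beyond all relevant critical thresholds, so that $P_{(j)} \le u_j$ for $j < i$ holds with probability one and the behavior at step $i$ is governed entirely by the true-null block. A secondary check, immediate from translation-invariance of Lebesgue measure, is that the cyclic-shift joint law is a bona fide probability distribution on $[0,1]^m$ with the prescribed uniform marginals.
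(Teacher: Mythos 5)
Your proof is correct, but there is nothing in the paper to compare it against: Theorem \ref{thm5.4} is imported verbatim from \cite{Gordon2008} and used as a black box, so the paper supplies no proof of its own. What you have written is essentially a self-contained reconstruction of Gordon's optimality argument: Dirac-at-zero $p$-values for $i-1$ false nulls to force the procedure through the first $i-1$ steps, plus a cyclic-shift (shifted-uniform) copula on the $m=n-i+1$ true-null $p$-values so that $\mathbb{P}(\min_k U_k \le t)=mt$ exactly, which turns the union bound into an equality and yields $FWER_T \ge \min(mu_i,1)>\alpha$. The construction is sound; the disjointness of the $m$ preimage intervals for $t\le 1/m$ and the resulting probability $mt$ check out, as does the reduction of the type I error event to $\{\min_k U_k\le u_i\}$. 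Two points deserve explicit mention. First, your argument necessarily reads the hypothesis ``$FWER_T\le\alpha$'' as a guarantee holding uniformly over all configurations \emph{and} all joint distributions of the $p$-values (true nulls superuniform, false nulls arbitrary); this is indeed Gordon's setting and the way the present paper invokes the theorem, but it should be stated, since under independence the conclusion is false (Šidák-type step-down cutoffs exceed $\alpha/(n-i+1)$ yet control FWER). Second, the degenerate-at-zero step for the false nulls is genuinely needed rather than a convenience: if some $u_j=0$ for $j<i$, a merely ``very small'' false-null $p$-value never satisfies $P_{(j)}\le u_j$, and only an exact atom at $0$ (permitted for false nulls in Gordon's framework, since superuniformity is imposed only under the null) rescues the argument; your limiting-alternative fallback alone would not cover that edge case. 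With that caveat made explicit, the proof stands.
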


\begin{proof}[\textbf{\upshape Proof of \autoref{thm5.3}.}]
We have,
\begin{align*}
 & \mathbb{P}_{\Sigma_{n}}(R_{n}(Holm)\geq 1) \\
 = & \mathbb{P}_{\Sigma_{n}}(P_{(1)} \leq \alpha/n) \\
 = & \mathbb{P}_{\Sigma_{n}}(X_{(n)} \geq c_{Bon}) \\
\leq & \mathbb{P}_{\Gamma_{n}(\delta)}(X_{(n)} \geq c_{Bon}) \quad \text{(using \autoref{Somesh})}\\
= & 1 - \mathbb{P}_{\Gamma_{n}(\delta)}(X_{(n)} \leq c_{Bon})\\
=&1 - \mathbb{P}_{\Gamma_{n}(\delta)}\left(X_{i} \leqslant c_{Bon} \quad \forall i=1,2, \ldots, n\right)\\
=&1 - \mathbb{P}_{\Gamma_{n}(\delta)}\Bigg[\bigcap_{i\in \mathcal{A}} \{X_i \leqslant c_{Bon} \} \bigcap \bigcap_{i \notin \mathcal{A}} \{X_i \leqslant c_{Bon} \} \Bigg]\\
=&1 - \mathbb{E}_{U}\Bigg[\bigg\{\prod_{i \in \mathcal{A}} F \left(\frac{c_{Bon}-\sqrt{\rho}U - \mu_{i}}{\sqrt{1-\rho}}\right)\bigg\}\cdot F^{n-|\mathcal{A}|}\left(\frac{c_{Bon}-\sqrt{\rho}U }{\sqrt{1-\rho}}\right)\Bigg]\\
\leq &1 - \mathbb{E}_{U} \left[F^{n}\left(\frac{c_{Bon}-\sqrt{\rho}U  - \mu^{\star}}{\sqrt{1-\rho}}\right)\right].
\end{align*}
The last quantity above tends to zero asymptotically since $\mu^{\star} < \infty$ (its proof is exactly similar to that of Theorem 2 of \cite{deybhandari}. Hence, 
\begin{equation}\label{holm2}
    \lim_{n \to \infty} \mathbb{P}_{\Sigma_{n}}(R_{n}(Holm)\geq 1) = 0.
\end{equation}
\noindent We observe,
$$\begin{aligned}
\mathbb{P}_{\Sigma_{n}}(R_{n}(T)\geq 1)
= \mathbb{P}_{\Sigma_{n}}(P_{(1)} \leq u_{1})
\leq & \mathbb{P}_{\Sigma_{n}}(P_{(1)} \leq \alpha/n) = \mathbb{P}_{\Sigma_{n}}(R_{n}(Holm)\geq 1).
\end{aligned}$$
\noindent The inequality above holds since we have $u_1 \leq \alpha/n$ from Theorem \ref{thm5.4}. The rest follows from \eqref{holm2}.
\end{proof}

\section{Empirical study and Discussion\label{sec:6}}
Under the general equicorrelated sequence model, we have $$X_{i} = \sqrt{1-\rho}Z_{i} + \sqrt{\rho} U,  \quad i \in \{1, \ldots,n\},$$
where $Z_{i} \sim F(0,1)$ (under the global null) and $U \sim G(0,1)$ independently. Also, $Z_{i}$’s are independent to each other. In this framework, one has
\begin{align*}
 FWER_{Bon}(n, \alpha, \rho)  = 1 - \int_{-\infty}^{\infty} F^{n}\left(\frac{c_{Bon}-\sqrt{\rho} u}{\sqrt{1-\rho}}\right) g(u) d u.
\end{align*}

To illustrate the theoretical findings of this work, we include the actual values of FWER in some specific examples (i.e, for some particular choices of $F$ and $G$). 

Table \ref{normal_laplace} presents the values of $FWER_{Bon}(n,\alpha=.05,\rho)$ when $Z_{i}\sim N(0,1)$ and $U\sim \operatorname{Laplace}(\text{mean}=0, \text{scale}=1/\sqrt{2})$. One notes that, for each positive $\rho$, FWER values decrease as $n$ grows. On the other hand, for each $n$, the FWER values decrease as $\rho$ increases. Also, the rate of decay is much faster for larger values of $\rho$. 
\newpage
\begin{table*}[!h]
\caption{$FWER_{Bon}(n,\alpha=.05,\rho)$ values for $Z_{i}\sim N(0,1)$ and $U\sim \operatorname{Laplace}(\text{mean}=0, \text{scale}=1/\sqrt{2})$}
\label{normal_laplace}
\begin{center}
\begin{tabular}{@{}ccccccc@{}}
\hline
Correlation & \multicolumn{6}{c}{Number of hypotheses ($n$)}\\
\cline{2-7}
\hspace{4.7mm} $(\rho)$ & \multicolumn{1}{c}{$100$}
& \multicolumn{1}{c}{$1000$} & \multicolumn{1}{c}{$10000$}
& \multicolumn{1}{c}{$100000$} & \multicolumn{1}{c}{$1$ Million} & \multicolumn{1}{c}{$10$ Million}\\
\hline
0    & 0.048782 &  0.048771 & 0.048771 & 0.048767 &  0.048737& 0.048381\\
0.1  & 0.042157 & 0.0358767 & 0.028132 & 0.020027 &  0.012776 & 0.007226\\
0.2  & 0.029444 & 0.0170793 & 0.008132 & 0.003311 & 0.0011920 & 0.000376 \\
0.3  & 0.018757 & 0.0076784 & 0.002604 & 0.000784 & 0.0002137 & 0.000021\\
0.4  & 0.011983 & 0.0038237 & 0.000936 & 0.000256 & 0.0000234 & 0.000013 \\
0.5  & 0.007902 & 0.0020776 & 0.000370 & 0.000101 & 0.0000133 & 0.000007 \\
0.6  & 0.005334 & 0.0011888 & 0.000151 & 0.000014 & 0.0000133 & 0.0000006 \\
0.7  & 0.003621 & 0.0006940 & 0.000017 & 0.000013 & 0.0000024 & 0.00000007\\
0.8  & 0.002410 & 0.0003983 & 0.000013 & 0.000013 & 0.00000007 & 0.00000007\\
0.9  & 0.001491 & 0.0002729 & 0.000013 & 0.000013 & 0.00000007 & 0.00000007 \\
\hline
\end{tabular}
\end{center}
\end{table*}

Table \ref{normal_t4} portrays the FWER values when $Z_{i}\sim N(0,1)$ and $\sqrt{2}U \sim t_{4}$. Here also, for each positive $\rho$, FWER values decrease as $n$ grows. However, it is striking to observe that the effect of $\rho$ on the FWER values is negligible in this case for large $n$. 

\begin{table*}[!h]
\caption{$FWER_{Bon}(n,\alpha=.05,\rho)$ values for $Z_{i}\sim N(0,1)$ and $\sqrt{2}U\sim t_4$}
\label{normal_t4}
\begin{center}
\begin{tabular}{@{}ccccccc@{}}
\hline
Correlation & \multicolumn{6}{c}{Number of hypotheses ($n$)}\\
\cline{2-7}
\hspace{4.7mm} $(\rho)$ & \multicolumn{1}{c}{$100$}
& \multicolumn{1}{c}{$1000$} & \multicolumn{1}{c}{$10000$}
& \multicolumn{1}{c}{$100000$} & \multicolumn{1}{c}{$1$ Million} & \multicolumn{1}{c}{$10$ Million}\\
\hline
0   & 0.048782 & 0.048772 & 0.048771 & 0.048771 & 0.048776 & 0.048770 \\
0.1 &0.033862 &0.009390 &0.000169 &0.000004 &0.000000104 & 0.000000104 \\
0.2 & 0.016563 &0.000938 &0.000062 &0.000003681 &0.000000104 & 0.000000104 \\
0.3 & 0.007615 &0.000399 &0.000044 &0.000003681 &0.000000104 & 0.000000104 \\
0.4 & 0.004184 &0.000249 &0.000036 &0.000003681 &0.000000104 & 0.000000104 \\
0.5 & 0.002821 &0.000179 &0.000031 &0.000003681 &0.000000104 & 0.000000104 \\
0.6 & 0.001973 &0.000139 &0.000027 & 0.000003681 &0.000000104 & 0.000000104 \\
0.7 & 0.001499 &0.000111 &0.000025 &0.000003681 &0.000000104 & 0.000000104 \\
0.8 & 0.001129 &0.000092 &0.000022 &0.000003681 &0.000000104 & 0.000000104 \\
0.9 & 0.000856 &0.000075 &0.000020 &0.000003681 &0.000000104 & 0.000000104\\
\hline
\end{tabular}
\end{center}
\end{table*}

\section{Concluding Remarks\label{sec:7}}
FWER has been a linchpin of simultaneous statistical inference because of its simplicity and relevance across various disciplines. The Bonferroni method, one of the earliest MTPs, has naturally enjoyed widespread use in countless applications. Besides its innumerable applicability, it enjoys interesting theory that often illuminates surprising results and, at the same time, complements applied implications. 

For example, the asymptotic convexity of the Bonferroni FWER under the equicorrelated Normal setup \citep{das_2021} is a purely probabilistic result in multivariate Normal theory. However, it also sheds light on the extent of the conservativeness of Bonferroni. A series of works has culminated since then which has tried to sharpen or extend this result in different directions: 
\begin{itemize}
    \item to more general correlation structures \citep{deybhandari},
    \item to arbitrary configurations of the null hypotheses \citep{deybhandari},
    \item or general stepwise procedures \citep{deystpa}. 
\end{itemize} However, a unifying theory for a more general class of distributions is nonexistent in the present literature. 

In a related but different direction, a few other works on the non-asymptotic considerations under the correlated Gaussian sequence model framework \citep{deycstm, deybhandaristpa}. However, the non-asymptotic upper bounds proposed in \cite{deycstm} are not asymptotically sharp since they do not converge to zero as the number of hypotheses grows indefinitely. 

Thus, a unifying theory for a general class of distributions that encompasses both the asymptotic and non-asymptotic regimes is called for. This paper achieves this by obtaining upper bounds that are asymptotically sharp. We start with an upper bound on the family-wise error rate(FWER) of Bonferroni's procedure for equicorrelated test statistics. Consequently, we find that for a quite general class of distributions, Bonferroni FWER asymptotically tends to zero when the number of hypotheses approaches infinity. We extend this result to general positively correlated elliptically contoured setups and also to the class of step-down procedures. We also present an example of distributions for which Bonferroni FWER has a strictly positive limit under equicorrelation. \autoref{thm5.3} is kind of a \textit{mother theorem} in asymptotic FWER theory, since it works under a quite general class of elliptically contoured distributions, covers a wide class of correlation structures, encompasses all step-down FWER controlling procedures and also accommodates any configuration of true and false null hypotheses.

Lastly, there are possible scopes of interesting extensions in a few other directions. One potential extension is to study the (limiting) behaviors of Hochberg, Hommel, and Benjamini-Hochberg procedures under general distributions. Another natural related question is whether similar asymptotic results hold for other classes of MTPs, e.g., the class of consonant procedures \citep{WTRWH}. 




\vspace{8mm}

\noindent \textbf{\Large Acknowledgements.} 
\vspace{3mm}

\noindent The author expresses his heartfelt thanks to Prof. Subir Kumar Bhandari and Prof. Thorsten Dickhaus for their encouragement and insightful discussions during this work.
\vspace{6mm}

\noindent \textbf{\large Declarations of interest:} none.

\bibliography{references}

\end{document}